\def\C{{\mathbb{C}}}
\def\E{{\mathbb{E}}}
\def\N{{\mathbb{N}}}
\def\0{{\mathbb{O}}}
\def\Q{{\mathbb{Q}}}
\def\R{{\mathbb{R}}}
\def\T{{\mathbb{T}}}
\def\Z{{\mathbb{Z}}}
\def\cB{{\mathcal B}}
\def\cC{{\mathcal C}}
\def\cD{{\mathcal D}}
\def\cI{{\mathcal I}}
\def\cO{{\mathcal O}}
\newcommand{\norm}[1]{{\left\|{#1}\right\|}}
\newcommand{\abs}[1]{{\left|{#1}\right|}}
\newcommand{\set}[1]{{\left\{{#1}\right\}}}
\newcommand{\un}{{\mathbf 1}}
\newcommand{\dst}{\displaystyle}
\newcommand{\actson}{\curvearrowright}
\def\fig{ \centerline{Fig. \the\count200\global\advance\count200 by 1}}
\newtheorem{thm}{Theorem}[section]
\newtheorem{cor}[thm]{Corollary}
\newtheorem{lem}[thm]{Lemma}
\newtheorem{prop}[thm]{Proposition}
\newtheorem*{thm*}{Theorem}
\newtheorem*{lem*}{Lemma}
\newtheorem*{cor*}{Corollary}
\theoremstyle{definition}
\newtheorem{defn}[thm]{Definition}
\newtheorem{ex}[thm]{Example}
\theoremstyle{remark}
\newtheorem{rem}[thm]{Remark}
\title[Pointwise limits for sequences of orbital integrals]{Pointwise limits for sequences of orbital integrals}
\author{Claire Anantharaman-Delaroche}
\address{Laboratoire de Math\'ematiques et Applications, Physique Math\'ematique d'Orl\'eans (MAPMO - UMR6628),Ê
F\'ed\'eration Denis Poisson (FDP - FR2964),
CNRS/Universit\'e d'Orl\'eans,
B. P. 6759, F-45067 Orl\'eans Cedex 2}
\email{claire.anantharaman@univ-orleans.fr}
\subjclass{Primary  47A35; Secondary 37A15, 28D05, 28C10, 26A42}
\keywords{Ergodic theory, transformation groups, Haar measures, Riemann and Lebesgue integrals, lattices}
\begin{document}

\begin{abstract} In 1967, Ross and Str\"omberg published a theorem about pointwise limits
of orbital integrals for the left action of a locally compact group $G$ onto $(G,\rho)$, where $\rho$ is the right Haar measure.
In this paper, we study the same kind of problem, but more generally for left actions of $G$
onto any measured space $(X,\mu)$, which leaves the $\sigma$-finite measure $\mu$
relatively invariant, in the sense that $s\mu = \Delta(s)\mu$ for every $s\in G$, where $\Delta$
is the modular function of $G$. As a consequence, we also obtain a generalization of a theorem
of Civin, relative to one-parameter groups of measure preserving transformations.

The original motivation for the circle of questions treated here dates back to classical
problems concerning pointwise convergence of Riemann sums relative to Lebesgue integrable functions.

\end{abstract}
\maketitle

\section{Introduction}
\setcounter{equation}{0}
\renewcommand{\theequation}{\thesection.\arabic{equation}}

The study of almost everywhere convergence of Riemann sums is an old problem, with
many ramifications (see \cite{R-W} for a recent  survey). Let us consider the interval $[0,1[$, identified with the torus
$\T = \R/(2\pi \Z)$, and let $\rho$ be the normalized Lebesgue measure on $\T$. Let $f: [0,1[ \to \C$ be a measurable function.
For $n\in \N^*$ and $x\in \T$, the corresponding Riemann sum of $f$ is defined by
$$R_nf(x) = \frac{1}{n} \sum_{j=0}^{n-1} f\big(x+\frac{j}{n}\big).$$
When $f$ is Riemann integrable, for any $x\in \T$ we have
\begin{equation}\label{Riem-sum}
\lim_n R_n(f)(x) = \int_{0}^1 f(t) d\rho(t).
\end{equation}

When $f$ is Lebesgue integrable, it is easily seen that $(R_n(f))$ converges in mean to $\int_{0}^1 f d\rho$ (see for instance
\cite[\S 2]{R-W}). On the other hand, the study of almost everywhere convergence is much more subtle. The first result
on this subject seems to date back to the paper \cite{Jes} of Jessen. Jessen's theorem states that if $(n_k)$ is a sequence
of positive integers such that $n_k$ divides $n_{k+1}$ for every $k$ then, for any $f\in L^1(\T)$, we have, for almost every $x\in \T$,
 \begin{equation}\label{divide-sum}
\lim_k R_{n_k}f(x) =  \int_{0}^1 f(t) d\rho(t).
\end{equation}
Soon after, Marcinkiewicz and Zygmund \cite{M-Z} on one hand, and Ursell \cite{Urs} independently, gave examples of functions $f\in L^1(\T)$ for which  (\ref{Riem-sum}) fails
almost everywhere. For instance, given $\frac{1}{2} < \delta < 1$,
$$f : x\in ]0,1] \mapsto \abs{x}^{-\delta},$$
 is such an example (see \cite{Urs, Rud}).  
Later, Rudin \cite{Rud} was even able to provide many examples of bounded measurable functions $f$ (characteristic functions indeed)
such that, for almost every $x\in \T$, the sequence $(R_nf(x))$ diverges. Moreover, Rudin's paper highlighted deep connections between
pointwise convergence of Riemann sums along a given subsequence $(n_k)$ of integers and arithmetical properties of the subsequence, a question now widely developed.
The following  different important question has also been considered by many authors :  under which kind of re\-gu\-larity  conditions on $f$
does the associated sequence $(R_n(f))$ of Riemann sums converges {\it a.e.} (see  \cite{R-W} for these questions and many related ones).

 In this paper, we deal with another sort of problem, namely
we study possible extensions of Jessen's result to general locally
compact groups and  dynamical systems.

Let us first come back to Jessen's theorem and give another formulation of this result. Denote by $G$ the group $\T$
and set $G_k = \Z/(n_k\Z)$. Then $(G_k)$ is an increasing sequence of closed subgroups of $G$, whose union
is dense in $G$. If $\rho_k$ is the Haar probability measure on $G_k$, Jessen's result
reads as follows~: for every $f\in L^1(\T)$,
$$\lim_{k\to \infty} \int_{G_k} f(t+x) d\rho_k(t) = \int_G f d\rho \quad a.e.$$

Under this form, this theorem has been extended by Ross and Str\"omberg to locally compact groups. An  assumption about the
behaviour of the modular functions of the subgroups is needed. It is automatically satisfied in the abelian case (see \cite{R-S} and Corollary
\ref{R-S} below). 

More generally, we are interested in this paper by the following questions. {\em Let $G\actson (X,\mu)$ be
an action of a locally compact group $G$ on a measured space $(X,\mu)$, where $\mu$ is $\sigma$-finite, and let $(G_n)_{n\in \N}$ be
an increasing sequence of closed subgroups of $G$, with dense union :
 \begin{itemize}
\item[(i)]  find conditions on the action and on the (right) Haar measures
$\rho_n$ of $G_n$, $\rho$ of $G$, so that
 for every $f\in L^1(X,\mu)$ and every $n$, $t\in G_n \mapsto f(tx)$ is $\rho_n$-integrable for almost every $x\in X$ ;
\item[(ii)] if (i) holds, study the pointwise convergence of the sequence of orbital integrals
$$ \int_{G_n} f(tx) d\rho_n(t).$$
\item[(iii)] identify the pointwise limit, in case it exists.
\end{itemize}}

A necessary condition for (i) to be satisfied is that, for every Borel subset $B$ of $X$ with $\mu(B)<+\infty$, for every $n\in \N$, and for
almost every $x\in X$,
$$\rho_n(\set{t\in G_n\,;\, x\in t^{-1} B}) = \int_{G_n} \un_B(tx) d\rho_n(t) <+\infty.$$

When $\mu$ is finite, say a probability measure, condition (i) implies that the groups $G_n$ are compact.
If we normalize their Haar measures
 by $\rho_n(G_n) = 1$,  the reversed martingale theorem implies that, for $f\in L^1(X,\mu)$, the sequence of orbital integrals converges pointwise and
 in mean (see \cite[Cor. 3.5, page 219]{Tem} and Proposition \ref{compact} below). This fact is well-known. Moreover,
 the limit is the conditional expectation of $f$ with respect to the $\sigma$-field
 of $G$-invariant Borel subsets of $X$. Of course, Jessen's theorem is a particular case. 

When $\mu$ is only $\sigma$-finite, we cannot use the reversed martingale theorem any longer and we shall
need other arguments. As already said,
Ross and Str\"omberg studied the case of the left action $G\actson (G,\rho)$. They normalized the right Haar measures
$\rho_n$ and $\rho$ in such a way that for every $f\in \cC_c(G)$ (the space of continuous functions with compact
support on $G$), we have $\lim_n \rho_n(f) = \rho(f)$. This is always possible, due to a result of Fell (see \cite[Chap. VIII, \S 5]{Bour},
and \cite{R-S} for more references). Morever, Ross and Str\"omberg assumed that for every $n$, the modular function of $G_n$
is the restriction of the modular function of $G$. We shall name this property {\it the modular condition} (MC). Under these assumptions,
Ross and Str\"omberg proved that for every $f\in L^1(G,\rho)$, one has, for almost every $x\in G$,
$$\lim_n \int_{G_n} f(tx) d\rho_n(t) = \int_{G} f(tx) d\rho(t) = \int_{G} f(t) d\rho(t)
\footnote{In this paper we adopt the following convention : each time we write an integral $\int \!\!f $, either $f$ is non-negative, or it is implicitely
contained in the statement that $f$ is integrable.}
.$$
Later, Ross and Willis \cite{Ross-W} provided an example showing that the modular condition does not always hold.
They also proved  that the Ross-Str\"omberg theorem always fails when the modular condition is not satisfied. 

Therefore, in   our paper {\sl we shall always assume that the increasing sequence $(G_n)$ of closed subgroups of $G$ has
a dense union and satisfies the modular condition (MC)}. We shall denote by $\Delta$ the modular function of $G$,
so that $\lambda = \Delta \rho$, where $\lambda$ is the left Haar measure of $G$. {\sl We also assume that
$G$ acts on $(X,\mu)$ in such a way that $\mu$ is $\Delta$-relatively invariant under the action}, in the sense that $s\mu = \Delta(s) \mu$ for
$s\in G$. An important example is the left action $G\actson (G, \rho)$. 

Under these assumptions,  we give a necessary and sufficient condition for  the pointwise limit theorem to
be satisfied.

\begin{thm*}[\bf\ref{lim-3}] The two following properties are equivalent :
\begin{itemize}
\item[(a)] $X$ is a countable union of Borel subsets $B_k$ of finite measure, such that for every $k$ and almost every
$x\in X$, we have
$$\rho(\set{t\in G\,;\,x\in t^{-1}B_k}) = \int_G \un_{B_k}(tx) d\rho(t) <+\infty.$$
\item[(b)]
For every $f\in L^1(X,\mu)$ and for almost every $x\in X$, 
$$\lim_n \int_{G_n} f(tx) d\rho_n(t) = \int_{G} f(tx) d\rho(t).$$
\end{itemize}
\end{thm*}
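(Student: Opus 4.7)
The plan is to prove the two implications separately, as direction (b)$\Rightarrow$(a) is essentially a tautology given the integration convention adopted in the paper, while (a)$\Rightarrow$(b) is the substantive content.

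\emph{Direction (b)$\Rightarrow$(a).} Since $\mu$ is $\sigma$-finite, I choose Borel sets $B_k$ with $\mu(B_k)<+\infty$ and $X=\bigcup_k B_k$. Applying (b) to the $L^1$-function $f=\un_{B_k}$, the right-hand side $\int_G \un_{B_k}(tx)\,d\rho(t)$ is implicitly finite for $\mu$-almost every $x$ (per the convention recalled in the footnote of the introduction), which is precisely condition (a).

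\emph{Direction (a)$\Rightarrow$(b), bounded case.} Fix $B=B_k$ as in (a) and treat first a non-negative, bounded $f\in L^1(X,\mu)$ supported in $B$. Then $f(tx)\le \|f\|_\infty\,\un_B(tx)$, so by (a) the slice $\tilde f_x:t\mapsto f(tx)$ lies in $L^1(G,\rho)$ for $\mu$-a.e.\ $x$. Applying Corollary~\ref{R-S} (Ross--Str\"omberg) to $\tilde f_x$, I obtain: for $\mu$-a.e.\ $x$ and $\rho$-a.e.\ $y\in G$,
\[
\lim_n \int_{G_n} f(tyx)\,d\rho_n(t)=\int_G f(tx)\,d\rho(t).
\]
The set of pairs $(y,x)\in G\times X$ where this fails is $\rho\otimes\mu$-negligible, so by Fubini I may fix a single $y_0\in G$ for which the limit holds at $\mu$-almost every $x$. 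Substitute $z=y_0 x$: the $\Delta$-relative invariance of $\mu$ gives $\mu(y_0 E)=\Delta(y_0)^{-1}\mu(E)$, so the substitution sends $\mu$-conull sets to $\mu$-conull sets; and the right-invariance of $\rho$ rewrites $\int_G f(ty_0^{-1}z)\,d\rho(t)=\int_G f(tz)\,d\rho(t)$ via $u=ty_0^{-1}$. Thus $\lim_n \int_{G_n} f(tz)\,d\rho_n(t)=\int_G f(tz)\,d\rho(t)$ for $\mu$-a.e.\ $z$, which is (b) for this class of $f$. Additivity over a decomposition adapted to the $B_k$ then handles every bounded $f$ supported in $\bigcup_k B_k$.

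\emph{The main obstacle: extending to arbitrary $f\in L^1(X,\mu)$.} Boundedness was used essentially to dominate $\tilde f_x$ by $\|f\|_\infty\un_B(\cdot x)$ and hence secure $\tilde f_x\in L^1(G,\rho)$ a.e.; for a merely $L^1$ function this is no longer automatic under (a), and the slice-by-slice Ross--Str\"omberg argument does not apply directly. I expect this gap to be closed by a maximal inequality for the sublinear operator $f\mapsto \sup_n \bigl|\int_{G_n} f(tx)\,d\rho_n(t)\bigr|$ on $L^1(X,\mu)$, presumably established in an earlier result of the paper (or derived by transferring the Ross--Str\"omberg maximal inequality on $G$ to $X$ using the $\Delta$-relative invariance of $\mu$). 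Combined with a Banach-principle density argument --- approximate $f\in L^1(X,\mu)$ by bounded functions supported in finite unions $B_1\cup\cdots\cup B_m$, apply the bounded case, and control the remainder uniformly in $n$ via the maximal inequality --- this would extend a.e.\ convergence to all of $L^1(X,\mu)$, finishing the proof.
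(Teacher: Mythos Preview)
Your Fubini/translation argument for bounded $f$ is exactly the paper's proof---and the paper applies it directly to \emph{every} $f\in L^1(X,\mu)$, with no boundedness assumption. The point you are missing is that condition (a) already forces $f_x\in L^1(G,\rho)$ for almost every $x$, for \emph{every} $f\in L^1(X,\mu)$: this is precisely Lemma~\ref{lem-conv} (condition (a) here is condition (i) there). Once you invoke that lemma, Corollary~\ref{R-S} applies to $f_x$ for a.e.\ $x$ with no restriction on $f$, and your Fubini trick (pass from ``for a.e.\ $x$, a.e.\ $s$'' to ``for a.e.\ $s$, a.e.\ $x$'', fix one good $s$, substitute $z=sx$ using the $\Delta$-relative invariance) finishes the proof in one stroke. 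The detour through a bounded case followed by a density extension is therefore unnecessary.

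Your proposed maximal-inequality route would also face a genuine obstacle. The maximal inequality available in the paper (Lemma~\ref{lem-ine-max}) requires a \emph{uniform} bound $\sup_n\rho_n(\{t\in G_n:tx\in X_k\})\le c_k$ valid for a.e.\ $x$, whereas condition (a) only gives the pointwise finiteness $\rho(\{t\in G:tx\in B_k\})<+\infty$ a.e., with neither uniformity in $x$ nor any control on the supremum over $n$. There is no evident way to upgrade (a) to the hypotheses of Lemma~\ref{lem-ine-max}, which is another reason the route through Lemma~\ref{lem-conv} is the correct one.
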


A crucial intermediate step is the above mentioned Ross-Str\"omberg theorem. For completeness,   
we provide a proof of this result, partly based on one of the ideas contained in \cite{R-S}.

We give examples where our theorem \ref{lim-3} applies. In particular, as an easy consequence we get our second
main result :

\begin{thm*}[\bf\ref{lattice}] Consider $G\actson (X,\mu)$, where now the $\sigma$-finite measure $\mu$ is invariant. 
Let $(G_n)_{n\in \N}$ be an increasing sequence of lattices in $G$, whose union is dense. We fix a Borel fundamental
domain $D$ for $G_0$ and we normalize $\rho$ by $\rho(D) = 1$ \footnote{Note that our assumptions imply the unimodularity
of $G$.}. Then, for every $G_0$-invariant function $f\in L^1(X,\mu)$ and for almost every $x\in X$, we have
$$\lim_n \frac{1}{\abs{G_n \cap D}} \sum_{t\in G_n \cap D} f(tx) = \int_D f(tx) d\rho(t),$$
where $\abs{G_n \cap D}$ is the cardinal of $G_n\cap D$.
\end{thm*}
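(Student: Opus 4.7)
The plan is to deduce Theorem~\ref{lattice} from Theorem~\ref{lim-3} by applying it to an auxiliary function $F:=\phi f$, where $\phi$ is a Borel partition of unity on $X$ for the $G_0$-action.

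First, since $G$ contains the lattice $G_0$, it is unimodular, so one may take $\rho$ bi-invariant with $\rho(D)=1$. For each discrete $G_n$, pick a Borel fundamental domain $D_n\subseteq D$; using $G_0\subseteq G_n$ one finds $\rho(D_n)=1/|G_n\cap D|$. Setting $\rho_n:=\rho(D_n)\cdot\#$, a Fell-type Riemann-sum argument yields $\rho_n\to\rho$ weakly on $\cC_c(G)$, and the modular condition (MC) holds trivially since $G$ and each $G_n$ are unimodular.

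Next I would choose a Borel $\phi:X\to[0,1]$ satisfying $\sum_{g\in G_0}\phi(gy)=1$ for $\mu$-a.e.\ $y$ and set $F:=\phi f\in L^1(X,\mu)$ (the bound $|F|\le|f|$ makes this immediate). The $G_0$-invariance of $f$ yields
$$F^\sharp(y):=\sum_{g\in G_0}F(gy)=f(y)\sum_{g\in G_0}\phi(gy)=f(y)\quad\text{a.e.,}$$
and, granted that condition~(a) of Theorem~\ref{lim-3} can be verified for the setting (e.g.\ by choosing $B_k$ inside $\{\phi>0\}$ of finite $\mu$-measure), Theorem~\ref{lim-3} applied to $F$ gives $\lim_n\int_{G_n}F(tx)\,d\rho_n(t)=\int_G F(tx)\,d\rho(t)$ a.e.\ $x$. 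Partitioning $G_n=\bigsqcup_{s\in G_n\cap D}G_0s$ (using $G_n\cap D$ as a transversal of $G_0\backslash G_n$) and $G=\bigsqcup_{g\in G_0}gD$, and invoking $F^\sharp=f$, both sides reduce to
$$\frac1{|G_n\cap D|}\sum_{s\in G_n\cap D}f(sx)\;\longrightarrow\;\int_D f(tx)\,d\rho(t),$$
which is the stated identity.

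The main obstacle is twofold. First, the Bruhat-type $\phi$ exists only when the $G_0$-action has finite stabilisers a.e.: the identity $\sum_g\phi(gy)=1$ decomposes as $|\mathrm{Stab}_{G_0}(y)|\sum_{z\in G_0\cdot y}\phi(z)=1$, which is impossible when $|\mathrm{Stab}_{G_0}(y)|=\infty$ (as in Jessen's setting where $G_0=\Z$ acts trivially on $\T$). Second, condition~(a) of Theorem~\ref{lim-3} typically fails in exactly this infinite-stabiliser regime, since $\rho(\{t\in G:tx\in B_k\})=\infty$ whenever the orbit of a.e.\ $x$ meets $B_k$ on a $G_0$-invariant set of positive measure. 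In the infinite-stabiliser case one must therefore proceed differently, either by reducing to the compact-subgroup (reverse-martingale) case of Theorem~\ref{lim-3} applied to the induced action of $G/G_0$ when $G_0$ is normal, or by a direct equidistribution argument for the finite transversals $G_0\backslash G_n$ inside the finite-volume homogeneous space $G_0\backslash G$.
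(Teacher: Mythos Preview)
Your proposal has a genuine gap, which you yourself diagnose: the construction of the Bruhat function $\phi$ on $X$ and the verification of condition~(a) of Theorem~\ref{lim-3} for the action $G\actson(X,\mu)$ both break down precisely when $G_0$ has infinite stabilisers on $X$. Since this is exactly the regime of the motivating Jessen--Civin examples ($G_0=\Z$ acting trivially on $\T$, or periodically on a flow), your argument as written covers only a strict special case, and the alternatives you sketch at the end are not carried out.

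The paper avoids this obstacle by a different route that you should compare with your own. Rather than applying Theorem~\ref{lim-3} to $(X,\mu)$, the paper applies it to the diagonal action $G\actson(G\times X,\rho\otimes\mu)$, where condition~(a) is automatic: take $B_k=U_k\times V_k$ with $U_k\subset G$ open and relatively compact, so that $\{t:t(s,x)\in B_k\}\subset U_k s^{-1}$ has finite $\rho$-measure regardless of stabilisers on $X$. This yields Corollary~\ref{presque}: for almost every $s\in G$,
\[
\lim_n \int_{Es\cap G_n} h(tx)\,d\rho_n(t)=\int_{Es} h(tx)\,d\rho(t)\quad\text{a.e.\ }x,
\]
applied here with $E=D$ and $h=f$. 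The price is the parasitic ``almost every $s$'', but this is where the $G_0$-invariance of $f$ is used: for any such good $s$, each $t\in G_n\cap Ds$ differs from a unique element of $G_n\cap D$ by an element of $G_0$, so the left side equals $\frac{1}{|G_n\cap D|}\sum_{t\in G_n\cap D}f(tx)$; and $\int_{Ds}f(tx)\,d\rho(t)=\int_D f(tx)\,d\rho(t)$ since $Ds$ is another fundamental domain and $f$ is $G_0$-invariant. Thus the $s$-dependence disappears and the theorem follows for all $f$ without any stabiliser hypothesis. In short, the paper's key idea is to move the Bruhat-type unfolding from $X$ (where it may fail) to $G$ (where it always works, via the shifted fundamental domain $Ds$).
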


This gives a simple way to extend a result of Civin \cite{Civ}, which treated the case $G = \R$ by a different method,
apparently not  directly adaptable to  more general locally compact groups $G$.

\section{Notation and conventions}
\setcounter{equation}{0}
\renewcommand{\theequation}{\thesection.\arabic{equation}}
In this paper,  locally compact spaces are implicitely assumed to be Hausdorff and $\sigma$-compact. A
measured space $(X,\mu)$ is a Borel standard space equipped with a (non-negative) $\sigma$-finite measure $\mu$.

Let $G$ be a locally compact group. We denote by $\Delta$ its modular function, and by $\lambda$ and $\rho$ respectively its left and right
Haar measures, so that $\lambda = \Delta \rho$. By an action of $G$ on a measured space $(X,\mu)$
we  mean a Borel map $G\times X \to X$, $(t,x)\mapsto tx$, which is a left action and leaves $\mu$ quasi-invariant. In fact we shall need the following
stronger property :

\begin{defn}\label{rel-inv} Given an action $G\actson (X,\mu)$, we say that $\mu$ is $\Delta${\it -relatively invariant} if $s\mu = \Delta(s)\mu$ for all $s\in G$.
\end{defn}

Note that this property is satisfied for the left action $G \actson (G,\rho)$.

In all our statements, we shall consider an increasing sequence $(G_n)$ of closed subgroups of $G$ with dense union. Then
$\Delta_n, \rho_n, \lambda_n = \Delta_n \rho_n$ will be the modular function and Haar measures of $G_n\,$, respectively. We shall have to choose
appropriate normalizations of $\rho$ and $\rho_n$, $n\in \N$. For a compact group, we usually choose its Haar measure to have
total mass one (but see remark \ref{choix} below).

As already mentioned in the introduction, there is also a natural normalization of the Haar measures as follows (see \cite[Chap. VIII, \S 5]{Bour},
and \cite{R-S}).

\begin{defn} Let $G$ be a locally compact group and $\rho$ a right Haar measure on $G$. Let $(G_n)$ be an increasing sequence of closed subgroups,
whose union is dense in $G$.  There is an essentially unique normalization of the right Haar measures $\rho_n$ of the $G_n$  
such  that, for every continuous function
$f$ on $G$ with compact support, we have $\lim_n \rho_n(f) = \rho(f)$. In this case,
we shall say that the sequence $(\rho_n)$ of  right Haar measures is {\it normalized with respect to $\rho$}.
We shall also say that it is a {\it Fell normalization}.
\end{defn}

 \begin{rem}\label{choix} When $G$ is compact, the Fell normalization is the  classical normalization, where  the Haar measures are probability measures.
 On the other hand, when the $G_n$ are compact whereas $G$ is not, it is easily seen that the Fell normalization implies that
 $\lim_n \rho_n(G_n) = +\infty$. For instance, let $G$ be a countable discrete group which is the union of an increasing sequence
 $(G_n)$ of finite subgroups ({\it e.g.} the group $S_\infty$ of finite permutations of the integers). Then, if $\rho$ is the counting
 measure on $G$, the normalization of the sequence $\rho_n$ with respect to $\rho$ is the sequence of counting
 measures, for which we have $\rho_n(G_n) = \abs{G_n}$, the cardinal of $G_n$.
\end{rem}

Finally, another property of the sequence $(G_n)$ will be fundamental in this paper. It was already present in the work
of Ross and Str\"omberg \cite{R-S} and later shown  to be crucial (see  \cite{Ross-W}).

\begin{defn} Let $G$ be a locally compact group and $(G_n)$ an increasing sequence of closed subgroups of $G$. We say that $(G_n)$
satisfies the {\it modular condition} (MC) if, for every $n$, the modular
function $\Delta_n$ of $G_n$ is the restriction to $G_n$ of the modular function $\Delta$ of $G$. 
\end{defn}

Let us explain the interest of this condition. Given an action $G\actson (X,\mu)$   leaving  $\Delta$-relatively invariant the measure $\mu$,
  the following property of the action holds~: for every Borel
function $f : X\times G \to \R^*_{+}$ (or any $\mu\otimes \lambda$-integrable function $f : X\times G \to \C$), we have
\begin{align}\label{eq-crucial}
 \int_{X\times G} f(tx,t) d\mu(x) d\rho(t) &=\int_{X\times G} f(x,t) d\mu(x) d\lambda(t)\notag \\
 & =  \int_{X\times G} f(x,t^{-1}) d\mu(x) d\rho(t).
\end{align}

This property will be essential throughout this paper, and we shall need it to remain satisfied for all the restricted actions $G_n \actson (X,\mu)$. 
This requires that the restriction of $\Delta$ to $G_n$ is the modular function of $G_n$.

\section{Limit theorems}
\setcounter{equation}{0}
\renewcommand{\theequation}{\thesection.\arabic{equation}}
\subsection{Compacity assumptions}
In this section, the Haar measure of every compact group will have total mass one.
\begin{prop}\label{compact}
Let $G$ be a locally compact group, acting in a measure preserving way on a  probability  space $(X,\mu)$.
Let $(G_n)$ be an increasing sequence of compact subgroups of $G$, whose union is dense in $G$. Let $f\in L^1(X,\mu)$.
\begin{itemize}
\item[(a)] For a.e. $x\in X$, we have $\lim_{n\to \infty} \int_{G_n} f(tx) d\rho_n(t) = \E(f|\cI)(x)$, where $\E(f|\cI)$ is the conditional expectation of $f$
with respect to the $\sigma$-field $\cI$ of $G$-invariant Borel subsets of $X$.
\item[(b)] If morever $G$ is compact, then $\E(f|\cI)(x) = \int_G f(tx) d\rho(t)$ for almost every $x\in X$.
\end{itemize}
\end{prop}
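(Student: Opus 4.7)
The plan is to realize each orbital average as a conditional expectation and then invoke the reversed martingale theorem.

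For each $n$, let $P_n f(x) = \int_{G_n} f(tx)\,d\rho_n(t)$. Since $G_n$ is compact (hence unimodular and of Haar mass one), a direct computation using (\ref{eq-crucial}) restricted to $G_n$ and a Fubini argument shows that $P_n$ is a selfadjoint contraction on $L^2(X,\mu)$ with $P_n^2 = P_n$, whose range is exactly the subspace of $G_n$-invariant functions. Since $G_n$ preserves $\mu$, $P_n$ is also a contraction on $L^1$, and its range is $L^1(X,\cI_n,\mu_{|\cI_n})$, where $\cI_n$ is the $\sigma$-field of $G_n$-invariant Borel subsets of $X$. Hence $P_n f = \E(f|\cI_n)$.

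Because $(G_n)$ is increasing, the sequence $(\cI_n)$ is \emph{decreasing}. Setting $\cI_\infty = \bigcap_n \cI_n$, the reversed martingale theorem (see \cite[Cor.~3.5, p.~219]{Tem}) yields
\[
\lim_{n\to\infty} P_n f(x) = \E(f|\cI_\infty)(x) \quad \text{for a.e. } x\in X,
\]
together with convergence in $L^1$.

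It remains to identify $\cI_\infty$ with $\cI$ modulo null sets. The inclusion $\cI \subset \cI_\infty$ is obvious, since a $G$-invariant set is $G_n$-invariant for every $n$. For the reverse inclusion, let $A\in \cI_\infty$. Then $\un_A$ is fixed by the Koopman representation $\pi(s)f(x) = f(s^{-1}x)$ at every $s\in \bigcup_n G_n$. The action being Borel and measure preserving, $\pi$ is a strongly continuous unitary representation of $G$ on $L^2(X,\mu)$; since $\bigcup_n G_n$ is dense in $G$, we obtain $\pi(s)\un_A = \un_A$ in $L^2$ for every $s\in G$, that is, $\mu(sA\triangle A) = 0$ for all $s$. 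Thus $A\in \cI$. This proves (a).

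For (b), if $G$ itself is compact, the same argument applied to $G$ in place of $G_n$ shows that the operator $f\mapsto \int_G f(t\,\cdot)\,d\rho(t)$ is the conditional expectation $\E(\,\cdot\,|\cI)$, giving (b) at once. The only delicate point is the identification $\cI_\infty = \cI$; everything else is a direct application of the reversed martingale theorem once the averaging operators are recognized as conditional expectations.
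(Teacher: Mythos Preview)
Your proof is correct and follows essentially the same route as the paper's: identify each orbital average $P_n f$ with the conditional expectation $\E(f\mid\cI_n)$ and apply the reversed martingale theorem to the decreasing family $(\cI_n)$. The only cosmetic differences are that the paper verifies the conditional-expectation property directly by integrating against $G_n$-invariant sets (rather than via the $L^2$ projection), and is terser about the identification $\cI_\infty=\cI$; your Koopman-continuity argument makes that step explicit, though strictly it yields $\mu(sA\triangle A)=0$ rather than $sA=A$, which suffices since the two $\sigma$-fields agree modulo null sets.
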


\begin{proof} (a) Observe first that $t\in G_n \mapsto f(tx)$ is $\rho_n$-integrable for almost every $x$, since
$$\int_X \Big(\int_{G_n}\abs{f(tx)} d\rho_n(t)\Big)d\mu(x) = \int_X \abs{f(x)} d\mu(x) <+\infty.$$
We set $R_n(f)(x) = \int_{G_n} f(tx) d\rho_n(x)$. Obviously, this function is $G_n$-invariant and it is $\mu$-integrable. Moreover,
let $A$ be a $G_n$-invariant Borel subset of $X$. Then we have
\begin{align*}
\int_A R_n(f)(x) d\mu(x) & = \int_{X\times G_n} \un_A(x) f(tx) d\mu(x) d\rho_n(t)\\
& = \int_{X\times G_n} \un_A(t^{-1}x) f(x) d\mu(x) d\rho_n(t)\\
&= \int_A f(x) d\mu(x),
\end{align*}
since $A$ is $G_n$-invariant.
Therefore $R_n(f)$ is the conditional expectation of $f$ with respect to the $\sigma$-field $\cB_n$ of
Borel $G_n$-invariant subsets of $X$. The sequence $(\cB_n)$ of $\sigma$-fields is decreasing. The
reversed martingale theorem \cite[page 119]{Nev} states that the sequence $(R_n(f))$ converges $\mu$-{a.e.}
and in mean to the conditional expectation of $f$ with respect to the $\sigma$-field $\cB_\infty = \cap_n \cB_n$ 
of $(\cup_n G_n)$-invariant Borel subsets of $X$. Since  $\cup_n G_n$ is dense in $G$ and $\mu$ is finite,
$\cB_\infty$ is also the $\sigma$-field of $G$-invariant Borel subsets of $X$.

The proof of (b) is straightforward.
\end{proof}

\begin{rem} Let $\mu$ be a $\sigma$-finite measure on a Borel space $(X,\cB)$, and let $(\cB_n)$ be a decreasing sequence
of $\sigma$-fields. Assume that $(X,\cB_n, \mu)$ is $\sigma$-finite for every $n$.  For $f\in L^1(X,\cB,\mu)$, the conditional
expectation $\E(f|\cB_n)$ is well defined, as a Radon-Nikod\'ym derivative. In \cite{Jer}, Jerison has proved that
$\lim_{n\to\infty} \E(f|\cB_n)(x)$ exists almost everywhere. Moreover, if $(X,\cB_\infty,\mu)$ is $\sigma$-finite,
the limit is $\E(f|\cB_\infty)$. Otherwise, one may write $X$ as the disjoint union of two elements $V,W$ of $\cB_\infty$, where $V$ is a countable union of elements of
$\cB_\infty$ of finite measure while any subset of $W$ that belongs to $\cB_\infty$ has measure $0$ or $\infty$. By \cite[\S 2.6]{Jer}, 
$\lim_{n\to\infty} \E(f|\cB_n)(x) = 0$ {\it a.e.} on $W$.

This observation can be used to prove that proposition \ref{compact} still holds under the weaker assumption that $\mu$
is $\sigma$-finite. Indeed, it is enough to prove that when $H$ is any compact group acting in measure preserving way on a $\sigma$-finite
measured space $(X,\cB,\mu)$, then $(X,\cB_H,\mu)$ is still $\sigma$-finite, where $\cB_H$ is the $\sigma$-field of Borel
$H$-invariant subsets. To show this fact, consider a  strictly positive function $f\in L^1(X,\cB, \mu)$. As seen in the proof of proposition \ref{compact},
the function $x\mapsto R(f)(x) = \int_H f(tx) d\rho(t)$ is $H$-invariant and $\mu$-integrable and we deduce 
that $(X,\cB_H,\mu)$ is $\sigma$-finite from the fact that $R(f)$ is strictly positive everywhere.
 Finally, for every  $f\in L^1(X,\cB, \mu)$, the conditional expectation $\E(f|\cB_H)$ 
may be defined and we have obviously $R(f)(x) = \E(f|\cB_H)$.

We shall give another proof of proposition \ref{compact} (b), for a compact group $G$ and a $\sigma$-finite measure $\mu$, in corollary \ref{compact1}.
\end{rem}

In the rest of the paper we are interested in the more general situation where $\mu$ is a $\sigma$-finite measure on $X$, and where the subgroups $G_n$
are not assumed to be compact. In particular $G$ is not always unimodular. 

\subsection{General case : local results} Let $G\actson (X,\mu)$ be a measured $G$-space. When the measure $\mu$ is not finite, it may be
useful to study the restriction of the action to every Borel subset $B$ of $X$ such that $\mu(B)<\infty$,
even if $B$ is not $G$-invariant\footnote{The restriction of $\mu$ to the Borel subspace $B$ will be denoted by the same letter.}.
We extend to $X$ every function defined on $B$, by giving it the value $0$ on
$X\setminus B$. In particular, we have $L^1(B,\mu) \subset L^1(X,\mu)$.

 Note that Borel functions of the form
$$x\mapsto \rho(\set{t\in G\,;\, tx \in B}) = \int_G \un_B(tx) d\rho(t)$$
are $G$-invariant. This crucial fact will be used repeatedly and without mention in the sequel.
 
\begin{thm}\label{lim-2} Let $(G_n)$ be an increasing sequence of closed subgroups of a locally compact group $G$, with dense union
and satisfying condition \hbox{(MC)}. Let
$G\actson (X,\mu)$ be an action leaving the measure $\Delta$-relatively invariant.
 Let $B$ be a Borel subset
of $X$ with $\mu(B) <+\infty$. For a.e. $x\in B$ and every $n\in \N$,  
we assume
that $0< \rho_n(\set{t\in G_n\,;\, tx\in B}) < +\infty$, where $\rho_n$ is a right Haar measure on $G_n$. Then, for every $f\in L^1(B,\mu)$, the averaging sequence of functions
$$x\in B \mapsto \frac{1}{\rho_n(\set{t\in G_n\,;\, tx\in B})} \int_{G_n} f(tx) d\rho_n(t)$$ converges, almost everywhere on $B$ and in $L^1$ norm,
to an element of $L^1(B,\mu)$.
\end{thm}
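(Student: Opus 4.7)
The plan is to realise the averaging operator as a conditional expectation with respect to the (finite) measure $\mu|_B$ and then to invoke the classical reversed martingale convergence theorem, exploiting that $(G_n)$ being increasing makes the associated $\sigma$-fields decrease. Set
\[
\phi_n(x) \defequal \int_{G_n} \un_B(tx)\,d\rho_n(t), \quad h_n(x) \defequal \int_{G_n} f(tx)\,d\rho_n(t), \quad A_n f(x) \defequal \frac{h_n(x)}{\phi_n(x)}.
\]
A substitution $u = ts$ using right-invariance of $\rho_n$ shows that $\phi_n$ and $h_n$ are $G_n$-invariant Borel functions on $X$. All the following computations rely on identity (\ref{eq-crucial}) applied to the restricted action of each $G_n$ on $(X,\mu)$; this is legitimate thanks to condition (MC), which gives $\Delta_n = \Delta|_{G_n}$, so that the $\Delta$-relative invariance of $\mu$ already expresses its $\Delta_n$-relative invariance under the $G_n$-action.

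Applying (\ref{eq-crucial}) to $F(y,t) \defequal \un_B(t^{-1}y)\,\phi_n(t^{-1}y)^{-1}\,|f(y)|$, and using the $G_n$-invariance $\phi_n(t^{-1}y) = \phi_n(y)$ to simplify $F(x,t^{-1})$, one obtains
\[
\int_B \frac{1}{\phi_n(x)}\int_{G_n} |f(tx)|\,d\rho_n(t)\,d\mu(x) \;=\; \int_B |f|\,d\mu.
\]
This simultaneously shows that $h_n(x)$ is finite for $\mu$-a.e.\ $x\in B$ (so $A_n f$ is well defined a.e.\ on $B$), that $A_n f \in L^1(B,\mu)$, and that $A_n$ is an $L^1$-contraction.

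The central step is the identification $A_n f = \E_\mu(f\,|\,\cF_n)$ on $(B,\mu|_B)$, where $\cF_n \defequal \{A' \cap B : A' \subset X \text{ Borel and } G_n\text{-invariant}\}$. Given such an $A'$, I would apply (\ref{eq-crucial}) to
\[
F(y,t) \;=\; \un_{A'}(t^{-1}y)\,\un_B(t^{-1}y)\,\phi_n(t^{-1}y)^{-1}\,f(y);
\]
the joint $G_n$-invariance of $\un_{A'}$ and of $\phi_n$ reduces $F(x,t^{-1})$ to $\un_{A'}(x)\un_B(tx)\phi_n(x)^{-1}f(x)$, so that integration in $t$ absorbs $\un_B(tx)$ into a factor $\phi_n(x)$ which cancels $\phi_n(x)^{-1}$, leaving
\[
\int_{A'\cap B} A_n f\,d\mu \;=\; \int_{A'\cap B} f\,d\mu.
\]
Combined with the $\cF_n$-measurability of $A_n f$ (which follows from its $G_n$-invariance), this is exactly the defining property of $\E_\mu(f\,|\,\cF_n)$.

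Since $(G_n)$ is increasing, $(\cF_n)$ is a decreasing sequence of sub-$\sigma$-fields of the finite measure space $(B,\mu|_B)$. The reversed martingale convergence theorem (cf.\ \cite[page 119]{Nev}, as already used in Proposition \ref{compact}) then yields both pointwise $\mu$-a.e.\ convergence on $B$ and $L^1(B,\mu)$-norm convergence of $A_n f$ to $\E_\mu\bigl(f\,\big|\,\bigcap_n \cF_n\bigr)$, which is an element of $L^1(B,\mu)$. The main obstacle is the identification step: one has to notice that the $G_n$-invariant weight $1/\phi_n$ can be absorbed into the substitution (\ref{eq-crucial}), so that after cancellation against $\phi_n$ one ends up with $f$ rather than $f\phi_n$ on the right-hand side. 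Once that is seen, the conclusion is a standard application of reversed martingales.
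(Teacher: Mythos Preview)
Your proof is correct and follows essentially the same route as the paper: identify $A_n f$ as a conditional expectation on the finite measure space $(B,\mu|_B)$ and invoke the reversed martingale theorem. The only cosmetic difference is your choice of $\sigma$-field $\cF_n=\{A'\cap B: A'\ G_n\text{-invariant Borel}\}$, whereas the paper uses the $\sigma$-field $\cB_n(B)$ of Borel subsets of $B$ saturated for the equivalence relation induced by $G_n$ on $B$; your $\cF_n$ is contained in $\cB_n(B)$, but since $A_n f$ is already $\cF_n$-measurable this makes no difference to the argument.
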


\begin{proof} We first check that for every $n$ and almost every $x\in B$, the function $ t\in G_n \mapsto  f(tx)$ is $\rho_n$-integrable.
This is a consequence of the following computation (where we use equality (\ref{eq-crucial}))~:
\begin{align*}
\int_B  \frac{1}{\rho_n(\set{s\in G_n\,;\, sx\in B})}&\Big(\int_{G_n} \abs{f(tx)} d\rho_n(t)\Big) d\mu(x) \\
& = \int_{X\times G_n} \un_B(x) \frac{\abs{f(tx)}}{\rho_n(\set{s\in G_n\,;\, sx\in B})}d\mu(x) d\rho_n(t)\\
& =  \int_{X\times G_n} \un_B(tx)\frac{\abs{f(x)}}{\rho_n(\set{s\in G_n\,;\, sx\in B})}d\mu(x) d\rho_n(t)\\
& =  \int_{B} \abs{f(x)}\frac{1}{\rho_n(\set{s\in G_n\,;\, sx\in B})} \Big(\int_{G_n} \un_B(tx) d\rho_n(t)\Big)d\mu(x)\\
& =  \int_{B} \abs{f(x)} d\mu(x)<+\infty.
\end{align*}
In addition, we see that $\displaystyle x\in B \mapsto \frac{\int_{G_n}f(tx)d\rho_n(t)}{\rho_n(\set{s\in G_n\,;\,sx\in B})}$ is $\mu$-integrable on 
 $B$. Denote by $R_n(f)$ this function defined on $B$.  

Let $\cO_n(B)$ be the equivalence relation on $B$ induced by the $G_n$-action : for $x, y \in B$, $x\sim_{\cO_n(B)} y$ if there exists $t\in G_n$
with $x= ty$. We denote by $\cB_n(B)$ the $\sigma$-field of Borel subsets of $B$ invariant under this equivalence
relation. Observe that $R_n(f)$ is invariant under $\cO_n(B)$. It is also straightforward to check that $R_n(f)$ is the conditional
expectation of $f$ with respect to $\cB_n(B)$. Then, again the conclusion follows from the reversed martingale theorem.
\end{proof}

 Concerning the pointwise convergence of sequences of orbital integrals, we immediately get :
  
 \begin{cor}\label{local} Under the assumptions of the previous theorem,
 the following conditions are equivalent :
 \begin{itemize}
 \item[(a)] $\lim_{n\to \infty}  \rho_n(\set{t\in G_n\,;\,t x\in B})$ exists a.e. on $B$ ;
 \item[(b)] for every $f\in L^1(B,\mu)$, $\lim_{n\to \infty}\int_{G_n}f(tx)d\rho_n(t)$ exists a.e. on $B$.
 \end{itemize}
 \end{cor}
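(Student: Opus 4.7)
The plan is to deduce both implications directly from Theorem~\ref{lim-2}, using the factorization
$$\int_{G_n} f(tx)\,d\rho_n(t) \;=\; R_n(f)(x)\cdot\rho_n(\set{t\in G_n\,;\,tx\in B})$$
that is built into the very definition of $R_n(f)$ in the proof of the previous theorem. With this identity in hand, each implication amounts to transferring a.e.\ convergence from one factor to the other (and to the product).

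For the direction (b)$\Rightarrow$(a), I would specialize hypothesis (b) to the function $f=\un_B$, which lies in $L^1(B,\mu)$ since $\mu(B)<+\infty$. The orbital integral of $\un_B$ at $x$ is exactly $\rho_n(\set{t\in G_n\,;\,tx\in B})$, so its a.e.\ convergence on $B$ is precisely the content of (a). Conversely, for (a)$\Rightarrow$(b), fix an arbitrary $f\in L^1(B,\mu)$: Theorem~\ref{lim-2} produces a function $R_\infty(f)\in L^1(B,\mu)$ with $R_n(f)(x)\to R_\infty(f)(x)$ a.e.\ on $B$, and in particular $R_\infty(f)$ is finite a.e.; hypothesis (a) then gives a.e.\ convergence of the other factor $\rho_n(\set{t\in G_n\,;\,tx\in B})$. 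Multiplying yields the a.e.\ convergence of $\int_{G_n} f(tx)\,d\rho_n(t)$, which is (b).

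The step most worth watching is the product-of-limits argument in the second implication: the limit of the weight $\rho_n(\set{t\in G_n\,;\,tx\in B})$ lies a priori in $[0,+\infty]$, and even though $R_\infty(f)(x)$ is finite a.e., the product could in principle take the indeterminate form $0\cdot(+\infty)$ on a subset where $R_\infty(f)$ vanishes and the weight diverges. Under the natural interpretation of ``the limit exists'' (consistent in both (a) and (b), e.g.\ in the extended reals, or as existence of a finite limit), this is not a genuine obstacle, and the equivalence follows immediately from the factorization combined with Theorem~\ref{lim-2}, with no further ingredients required.
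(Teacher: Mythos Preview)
Your argument is correct and is exactly the paper's (implicit) approach: the paper gives no proof and treats the corollary as an immediate consequence of Theorem~\ref{lim-2}, which is precisely what you obtain via the factorization $\int_{G_n} f(tx)\,d\rho_n(t)=R_n(f)(x)\cdot\rho_n(\set{t\in G_n\,;\,tx\in B})$ together with the special case $f=\un_B$. Your remark on the potential $0\cdot\infty$ indeterminacy is a fair caveat that the paper itself does not address; under the natural reading that ``the limit exists'' means a finite limit in both (a) and (b), the equivalence is indeed immediate.
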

 
 \subsection{General case : global results}
 
 In order to study the problem globally, we shall need the following lemma, insuring the integrability
 of orbital functions.
 
\begin{lem}\label{lem-conv} Let $G\actson (X,\mu)$ be an action leaving the measure $\Delta$-relatively invariant. The two following conditions are equivalent :
\begin{itemize}
\item[(i)] $X = \cup B_k$, where every $B_k$ is a Borel subset of $X$, with $\mu(B_k) < +\infty$, such that 
for almost every $x\in X$, 
$$\rho(\set{t\in G\,;\, tx\in B_k}) <\infty.$$
\item[(ii)] For every $f\in L^1(X,\mu)$, $f_x : t\mapsto f(tx)$
belongs to $L^1(G,\rho)$ for a.e. $ x\in X$.
\end{itemize}
\end{lem}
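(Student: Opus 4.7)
The plan is to translate each side of the equivalence into a property of the "sojourn" function $\varphi_B(x) := \rho(\set{t\in G : tx\in B})$ and to exploit the relative-invariance identity (\ref{eq-crucial}) to move integrations between $X$ and $G$.

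The direction (ii) $\Rightarrow$ (i) is immediate. Using the $\sigma$-finiteness of $\mu$, write $X=\cup_k B_k$ with $\mu(B_k)<+\infty$, and apply (ii) to $f=\un_{B_k}\in L^1(X,\mu)$: then $\varphi_{B_k}(x) = \int_G\un_{B_k}(tx)\,d\rho(t)<+\infty$ for a.e. $x\in X$, which is exactly (i).

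For (i) $\Rightarrow$ (ii) the key preliminary is that, since $\rho$ is right-invariant, each $\varphi_{B_k}$ is $G$-invariant, so the level sets $\set{\varphi_{B_k}\leq N}$ are $G$-invariant. I would refine the decomposition by setting
$$B_{k,N} := B_k\cap\set{\varphi_{B_k}\leq N};$$
the $G$-invariance of $\set{\varphi_{B_k}\leq N}$ implies $\varphi_{B_{k,N}}(x) = \varphi_{B_k}(x)\,\un_{\set{\varphi_{B_k}\leq N}}(x)\leq N$ for every $x\in X$. Condition (i) says $\varphi_{B_k}<+\infty$ a.e., so $X=\cup_{k,N\in\N^*}B_{k,N}$ modulo a $\mu$-null set.

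Given a non-negative $f\in L^1(X,\mu)$, I then apply (\ref{eq-crucial}) to $g(y,s) := \un_{B_{k,N}}(s^{-1}y)\,f(y)$, for which $g(tx,t)=\un_{B_{k,N}}(x)f(tx)$ and $g(x,t^{-1})=\un_{B_{k,N}}(tx)f(x)$. Tonelli together with (\ref{eq-crucial}) then yields
$$\int_{B_{k,N}}\Big(\int_G f(tx)\,d\rho(t)\Big)\,d\mu(x) = \int_X f(x)\,\varphi_{B_{k,N}}(x)\,d\mu(x)\leq N\norm{f}_1<+\infty,$$
so $t\mapsto f(tx)$ is $\rho$-integrable for a.e. $x\in B_{k,N}$, and hence, summing over $k$ and $N$, for a.e. $x\in X$. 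The only real subtlety is that (i) asserts only a.e. finiteness of $\varphi_{B_k}$, not boundedness; the $G$-invariance of $\varphi_{B_k}$ is what makes the truncation above preserve the covering of $X$ while producing the uniform bound needed to invoke Tonelli's theorem.
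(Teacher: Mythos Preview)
Your proof is correct. The direction (ii)$\Rightarrow$(i) is identical to the paper's. For (i)$\Rightarrow$(ii) you take a genuinely different route: the paper does not refine the cover, but works directly on each $B_k$. It first separates the $G$-invariant set $A=\set{\varphi_{B_k}=0}$ and shows that $\int_G f(tx)\,d\rho(t)=0$ a.e.\ on $B_k\cap A$; on $B_k\setminus A$ it inserts the weight $1/\varphi_{B_k}(x)$ before applying (\ref{eq-crucial}), and after the change of variables the inner integral is bounded by $1$, so the whole expression is dominated by $\norm{f}_1$. Your truncation $B_{k,N}=B_k\cap\set{\varphi_{B_k}\le N}$ trades the possibly unbounded weight for a uniform bound $N$, which removes the need for a case split at the price of a doubly-indexed cover. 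The paper's normalization is the same device that drives the averaging operators in Theorem~\ref{lim-2}, so its proof of the lemma is thematically aligned with the surrounding arguments; yours is slightly more elementary and self-contained. Both rest on the same pivot you correctly single out: the $G$-invariance of $\varphi_{B_k}$.
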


\begin{proof} (ii) $\Rightarrow$ (i) is obvious : write $X$ as a countable union of Borel subsets $B_k$ of finite measure and take $f= \un_{B_k}$.

(i) $\Rightarrow$ (ii). Let $f\in L^1(X,\mu)_+$. It suffices to show that for every $k$ and {\it a.e.} $x\in B_k$,
the function $t\mapsto f(tx)$ is $\rho$-integrable. We set
$$ A = \set{x\in X\,;\,  \int_G \un_{B_k}(tx) d\rho(t) = 0}.$$
Note that $A$ is $G$-invariant.

We first check that for {\it a.e.} $x\in B_k \cap A$, we have $\int_G f(tx) d\rho(t) = 0$.
Indeed,
\begin{align*}
\int_{B_k\cap A}\int_G f(tx) d\rho(t) d\mu(x) & =
\int_{X\times G} \un_{B_k \cap A}(tx) f(x) d\rho(t) d\mu(x) \\
&= \int_X f(x) \Big(\int_G  \un_{B_k \cap A}(tx)d\rho(t)\Big) d\mu(x) = 0.
\end{align*}
The first equality uses relation (\ref{eq-crucial}) and the last one follows from the observation that $\un_{B_k \cap A}(tx)\not= 0$ implies
$x\in A$ since $A$ is $G$-invariant. Hence, we get  $\int_G f(tx) d\rho(t) = 0$ {\it a.e.} on $B_k \cap A$.

Now let us consider $\displaystyle\int_{B_k\setminus A} \frac{1}{\rho(\set{s\in G\,;\, sx\in B_k})}\Big(\int_G f(tx) d\rho(t)\Big) d\mu(x)$.
This integral is equal to
\begin{align*}
&\int_{X\times G} \un_{B_k\setminus A}(x)\frac{1}{\rho(\set{s\in G\,;\, sx\in B_k})} f(tx) d\mu(x)d\rho(t) \\
&= \int_{X\times G} \un_{B_k\setminus A}(tx)\frac{1}{\rho(\set{s\in G\,;\, sx\in B_k})} f(x)  d\mu(x)d\rho(t)\\
&=\int_{X\setminus A} f(x)\Big(\int_G\frac{\un_{B_k\setminus A}(tx)}{\rho(\set{s\in G\,;\, sx\in B_k})}d\rho(t)\Big) d\mu(x)\\
&\leq \int_{X\setminus A} f(x) d\mu(x) <+\infty,
\end{align*}
since,  for every $x$ such that $tx \in B_k\setminus A$ we have $x\in G(B_k\setminus A) = GB_k\setminus A$, and
$$\int_G \un_{B_k\setminus A}(tx) d\rho(t) \leq \int_G \un_{B_k}(tx) d\rho(t) = \rho(\set{s\in G\,;\, sx\in B_k}),$$
with
$$0<  \rho(\set{s\in G\,;\, sx\in B_k}) <+\infty\quad a.e. \quad\text{on}\quad X\setminus A.$$

It follows that  $\int_G f(tx) d\rho(t) <+\infty$ for amost every $x \in B_k\setminus A$.
\end{proof}

\begin{rem}\label{action} The assumption of this lemma holds for instance when $G$ acts on $(X,\mu)$, where  $\mu$
is a  $\Delta$-relatively invariant  Radon measure on a locally compact space $X$, 
the action being continuous with closed orbits and compact stabilizers.
Indeed, in this situation, for $x\in X$, the natural map $G/G_x \to Gx$, where $G_x$ is the stabilizer of $x$, is an homeomorphism.
Then if $B$ is an open relatively compact subset of $X$, the set $\set{t\in G\,;\, tx\in B}$ is  open and relatively compact
in $G$ and the conclusion follows. Particular cases are proper actions, and more generally integrable actions \cite{Rie}.
\end{rem}

We shall now give a condition sufficient to guarantee the pointwise convergence of sequences of orbital integrals for every $f\in L^1(X,\mu)$.
Note that the integrability of the functions appearing in the statement below follows from lemma \ref{lem-conv}.

\begin{thm}\label{lim-1} Let $(G_n)$ be an increasing sequence of closed subgroups in $G$, with dense union
and satisfying condition \hbox{(MC)}. Let
$G\actson (X,\mu)$ be an action leaving the measure $\Delta$-relatively invariant.
 We assume that $X = \cup X_k$ where $(X_k)$ is an increasing
sequence of Borel subspaces, such that for all $k$,
\begin{itemize}
\item[(i)] $\mu(X_k) <+\infty$ ;
\item[(ii)] $\rho_n(\set{t\in G_n\,;\, tx\in X_k})>0$ for almost every $x\in X_k$  and every $n$ ;
\item[(iii)] there exists $c_k >0$ such that for  almost every  $x\in X$,
 $$\sup_{n} \rho_n(\set{t\in G_n\,;\, tx\in X_k}) \leq c_k
  .$$
 \end{itemize}
 The following conditions are equivalent : 
 \begin{itemize}
 \item[(a)] for every $k$,  $\lim_n \rho_n(\set{t\in G_n\,;\, tx\in X_k})$ exists  for a.e. $x \in X_k$ ;
\item[(b)]  the pointwise limit $\lim_n \int_{G_n} f(tx)d\rho_n(t)$ exists a.e. on $X$, for every $f\in L^1(X,\mu)$ ;
 \item[(c)] there exists a dense subset $\cD$  of $L^1(X,\mu)$ such that the pointwise limit $\lim_n \int_{G_n} f(tx)d\rho_n(t)$ exists a.e. on $X$, for every $f$ in $\cD$.
 \end{itemize}
 \end{thm}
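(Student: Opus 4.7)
The implications (b) $\Rightarrow$ (a) and (b) $\Rightarrow$ (c) are immediate: the first by applying (b) to $f = \un_{X_k}$, since $S_n f(x) := \int_{G_n} f(tx)\,d\rho_n(t) = \rho_n(\{t \in G_n : tx \in X_k\})$; the second by taking $\cD = L^1(X,\mu)$. The real task is to prove (a) $\Rightarrow$ (b) and (c) $\Rightarrow$ (b). Both will follow from a common weak-type maximal inequality, which is the heart of the plan.

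The central technical step is to establish that for each $k$ there exists $C_k > 0$ with
$$\mu\bigl(\{x \in X_k : \sup_n |S_n f(x)| > \lambda\}\bigr) \leq C_k \|f\|_1/\lambda, \qquad f \in L^1(X,\mu).$$
Writing $a_n^{(k)}(x) := \rho_n(\{t \in G_n : tx \in X_k\})$, the proof of Theorem \ref{lim-2} identifies, for $f$ supported in $X_k$, the ratio $R_n^{(k)}(f) := S_n f / a_n^{(k)}$ as the conditional expectation $\E(f \mid \cB_n(X_k))$ with respect to $\mu\big|_{X_k}$. Doob's maximal inequality for reversed martingales, together with (iii) in the form $a_n^{(k)} \leq c_k$, then gives the claim with $C_k = c_k$ in this case. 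For general $f \in L^1(X,\mu)$, I would decompose $f = f\un_{X_k} + f\un_{X\setminus X_k}$ and control the off-$X_k$ piece through the bound
$$\int_{X_k} S_n |f\un_{X\setminus X_k}|\,d\mu \leq c_k \|f\|_1,$$
which is a direct consequence of (\ref{eq-crucial}) and (iii), combined with a further reverse-martingale-type argument that upgrades this uniform $L^1$ estimate to a genuine maximal bound.

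Granted the maximal inequality, (a) $\Rightarrow$ (b) proceeds as follows. For $f$ supported in some $X_j$, Corollary \ref{local} applied to each $X_l$ with $l \geq j$ (whose hypotheses are ensured by (i), (ii), and (a) at level $l$) gives a.e.\ convergence of $S_n f$ on $X_l$, hence on $X = \cup_l X_l$. For arbitrary $f \in L^1(X,\mu)$, the truncations $f^{(j)} := f\un_{X_j}$ approximate $f$ in $L^1$-norm, and the maximal inequality applied to $f - f^{(j)}$ yields, for every $k$ and $\lambda > 0$,
$$\mu\bigl(\{x \in X_k : \limsup_n S_n f(x) - \liminf_n S_n f(x) > 2\lambda\}\bigr) \leq C_k \|f - f^{(j)}\|_1/\lambda,$$
which tends to $0$ as $j \to \infty$. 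Hence $S_n f$ converges a.e.\ on each $X_k$, and therefore on $X$. The implication (c) $\Rightarrow$ (b) is then the standard Banach principle: by the maximal inequality, the set of $f \in L^1(X,\mu)$ for which $S_n f$ converges a.e.\ on $X$ is closed in $L^1$-norm; containing the dense subset $\cD$, it must coincide with $L^1(X,\mu)$.

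The principal obstacle is the extension of the maximal inequality from functions supported in $X_k$ to arbitrary $f \in L^1(X,\mu)$: the reverse-martingale interpretation of $R_n^{(k)}(f)$ is transparent only in the former case, and the off-$X_k$ contribution, although controlled in $L^1(X_k)$-norm uniformly in $n$ by (iii), must be lifted to a pointwise bound on the maximal function by a subtler martingale-domination argument.
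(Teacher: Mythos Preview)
Your overall architecture is the same as the paper's: the chain is really $(b)\Rightarrow(a)\Rightarrow(c)\Rightarrow(b)$, with $(a)\Rightarrow(c)$ obtained by applying Corollary~\ref{local} on each $X_l$ to functions supported in some $X_j$, and $(c)\Rightarrow(b)$ obtained via a weak-type maximal inequality and the Banach principle. The difference lies entirely in how the maximal inequality is proved, and this is where your proposal has a genuine gap.

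Your plan is to obtain $\mu(\{x\in X_k: f^\star(x)>\alpha\})\le c_k\|f\|_1/\alpha$ by first treating $f$ supported in $X_k$ via Doob (using that $S_nf/a_n^{(k)}=\E(f\mid\cB_n(X_k))$), and then handling $f\un_{X\setminus X_k}$ by ``upgrading'' the uniform $L^1$ bound $\|S_n|g|\|_{L^1(X_k)}\le c_k\|g\|_1$ to a maximal bound through some further reverse-martingale device. But for $g$ supported off $X_k$, the restrictions $S_ng|_{X_k}$ are \emph{not} conditional expectations on $(X_k,\mu)$ with respect to any decreasing filtration: there is no martingale structure to invoke, and a uniform $L^1$ bound alone never yields a weak-$(1,1)$ maximal estimate. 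You flag this as the principal obstacle, and it is a real one; as written, the argument does not close.

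The paper bypasses the decomposition entirely. Its Lemma~\ref{lem-ine-max} proves the maximal inequality directly for \emph{all} nonnegative Borel $f$ by a stopping-time argument that uses only the $G_n$-invariance of the level sets $E_n=\{\phi_n>\alpha\}$ (which holds because $\phi_n$ is $G_n$-invariant on all of $X$, not just on $X_k$) together with the change-of-variable relation~(\ref{eq-crucial}). One sets $F_n=E_n\cap\bigcap_{l>n}^N E_l^c$, observes that $F_n$ is $G_n$-invariant, and computes
\[
\alpha\,\mu(F_n\cap X_k)\le\int_{F_n\cap X_k}\phi_n\,d\mu=\int_{X}f(x)\Bigl(\int_{G_n}\un_{F_n\cap X_k}(tx)\,d\rho_n(t)\Bigr)d\mu(x)\le c_k\int_{F_n}f\,d\mu,
\]
the last step because $\un_{F_n\cap X_k}(tx)\ne 0$ forces $x\in G_nF_n=F_n$ and $\int_{G_n}\un_{X_k}(tx)\,d\rho_n(t)\le c_k$. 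Summing over $n$ gives $\alpha\,\mu(Q_\alpha\cap X_k)\le c_k\int_{Q_\alpha}f\,d\mu$. No martingale theory is needed, and no splitting into on/off-$X_k$ pieces. Once you have this lemma, your $(a)\Rightarrow(b)$ and $(c)\Rightarrow(b)$ go through exactly as you wrote them.
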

 
 \begin{rem}\label{en-pratique}  Assume that for every $x\in X_k$ and every $n$,
\begin{equation}\label{pratique}\rho_n(\set{t\in G_n\,;\, tx\in X_k}) \leq c_k.
\end{equation}
 Then, assumption (iii) of the above theorem is fulfilled. Indeed, by invariance, if (\ref{pratique}) holds for $x\in X_k$,
 it also holds for $x\in G_n X_k$. On the other hand, if $x\notin G_n X_k$ then 
 $$\set{t\in G_n\,;\, tx\in X_k} =\emptyset\,,$$
 and therefore $\rho_n(\set{t\in G_n\,;\, tx\in X_k}) = 0 \leq c_k$.
  \end{rem}
 
 For the proof of theorem \ref{lim-1}, we need the following lemma which repeats arguments from \cite[Lemma 3]{R-S}.
 
 \begin{lem}\label{lem-ine-max}  Let $(G_n)$ be an increasing sequence of closed subgroups in $G$, with dense union
and satisfying condition \hbox{(MC)}. Let
$G\actson (X,\mu)$ be an action leaving the measure $\Delta$-relatively invariant. Let $X_k\subset X$ satisfying conditions
(i) and (iii) of the previous theorem and let $f : X \to \R_+$ be a Borel function. For $x\in X$, set 
 $$f^\star(x) = \sup_n \int_{G_n} f(tx)d\rho_n(t),$$ and
 for $\alpha >0$, set $Q_\alpha = \set{x\in X \,;\, f^\star(x) >\alpha}$. Then  we have

 \begin{equation}\label{ine-max}
 \alpha\mu(Q_\alpha \cap X_k) \leq c_k \int_{Q_\alpha} f d\mu.
 \end{equation}
 \end{lem}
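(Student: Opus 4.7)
The plan is a standard stopping-time decomposition for maximal inequalities, adapted to this setting via equation (\ref{eq-crucial}).

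First, for each $n$ set $A_n=\set{x\in X\,;\, \int_{G_n}f(tx)d\rho_n(t)>\alpha}$, and then define $B_0=A_0$ and $B_n=A_n\setminus(A_0\cup\cdots\cup A_{n-1})$ for $n\ge 1$. The $B_n$ are pairwise disjoint and $Q_\alpha=\bigcup_n B_n$. The key observation is that each $A_n$ (hence each $B_n$) is $G_n$-invariant: if $s\in G_n$, then by right-invariance of $\rho_n$ the change of variable $u=ts$ gives $\int_{G_n}f(tsx)d\rho_n(t)=\int_{G_n}f(ux)d\rho_n(u)$.

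Next I apply Markov's inequality on each $B_n\cap X_k$:
\[
\alpha\,\mu(B_n\cap X_k)\leq \int_{B_n\cap X_k}\Big(\int_{G_n}f(tx)d\rho_n(t)\Big)d\mu(x).
\]
To bound the right-hand side, I would apply (\ref{eq-crucial}) for the restricted action $G_n\actson(X,\mu)$, which is legitimate since the modular condition (MC) ensures $\Delta_{|G_n}=\Delta_n$ and $\mu$ remains $\Delta_n$-relatively invariant. Taking the auxiliary function $h(y,t)=f(y)\un_{B_n\cap X_k}(t^{-1}y)$, so that $h(tx,t)=\un_{B_n\cap X_k}(x)f(tx)$ and $h(x,t^{-1})=f(x)\un_{B_n\cap X_k}(tx)$, this yields
\[
\int_{B_n\cap X_k}\!\int_{G_n}f(tx)d\rho_n(t)d\mu(x)=\int_X f(x)\Big(\int_{G_n}\un_{B_n\cap X_k}(tx)d\rho_n(t)\Big)d\mu(x).
\]

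Now the main point is to exploit $G_n$-invariance of $B_n$ together with hypothesis (iii). Since $B_n$ is $G_n$-invariant, $tx\in B_n$ with $t\in G_n$ forces $x\in B_n$, so the outer integral reduces to one over $B_n$. Moreover, dominating $\un_{B_n\cap X_k}(tx)\leq\un_{X_k}(tx)$ and using (iii) gives the pointwise bound $\int_{G_n}\un_{B_n\cap X_k}(tx)d\rho_n(t)\leq c_k$ for a.e.\ $x$. Hence
\[
\alpha\,\mu(B_n\cap X_k)\leq c_k\int_{B_n}f\,d\mu.
\]
Summing over $n$ and using disjointness of the $B_n$ with $Q_\alpha=\bigsqcup_n B_n$ delivers (\ref{ine-max}).

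The main obstacle is really just bookkeeping: one has to set up the auxiliary function $h$ carefully so that (\ref{eq-crucial}) exchanges the roles of $x$ and $tx$ in the right way, and one must verify that the modular condition (MC) is exactly what allows (\ref{eq-crucial}) to be used with $(G_n,\rho_n)$ in place of $(G,\rho)$. Once the $G_n$-invariance of $B_n$ is noted, everything else is a direct application of Markov's inequality and hypothesis (iii).
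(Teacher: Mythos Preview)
There is a genuine gap: your first-time stopping sets $B_n$ are \emph{not} $G_n$-invariant. You correctly observe that $A_n$ is $G_n$-invariant, but $B_n=A_n\setminus(A_0\cup\cdots\cup A_{n-1})$ also involves the sets $A_l$ for $l<n$, and $A_l$ is only $G_l$-invariant. Since $G_l\subset G_n$ (not the other way round), there is no reason for $A_l$ to be $G_n$-invariant, and in general it is not. Consequently the step ``$tx\in B_n$ with $t\in G_n$ forces $x\in B_n$'' fails, and you cannot reduce the outer integral to one over $B_n$. Without that reduction the argument collapses: you would only get $\alpha\,\mu(B_n\cap X_k)\le c_k\int_{G_nB_n} f\,d\mu$, and the sets $G_nB_n$ overlap.

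The remedy, which is exactly what the paper does, is to reverse the stopping time. Truncate at level $N$, put $D_N=\bigcup_{n\le N}A_n$, and decompose $D_N=\bigsqcup_{n=1}^N F_n$ with
\[
F_n=A_n\cap A_{n+1}^c\cap\cdots\cap A_N^c,
\]
i.e.\ $n$ is the \emph{largest} index (up to $N$) with $\phi_n(x)>\alpha$. Now $F_n$ involves only $A_l$ for $l\ge n$; since $G_n\subset G_l$, each such $A_l$ is $G_n$-invariant, hence so is $F_n$. Your Markov/change-of-variables computation then goes through verbatim with $F_n$ in place of $B_n$, giving $\alpha\,\mu(F_n\cap X_k)\le c_k\int_{F_n}f\,d\mu$; summing and letting $N\to\infty$ yields the lemma.
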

 
 \begin{proof} For $n\in \N$, we set $\phi_n(x) =\int_{G_n} f(tx)d\rho_n(t)$ and we introduce the subsets
 \begin{align*}
 E_n &= \set{x\in X\,;\, \phi_n(x) >\alpha}\\
D_n & = \set{x\in X\,;\, \sup_{1\leq l \leq n} \phi_l(x) > \alpha}. 
\end{align*}
We fix an in integer $N$. It is enough to show that
$$\alpha\mu(D_N \cap X_k) \leq c_k \int_{D_N} f d\mu$$ 
since $Q_\alpha$
 is the increasing union of the sets $D_N$, $N\geq 1$. 

We decompose $D_N$ as the union $\dst \cup_{n= 1}^N F_n$, where the subsets $F_n$ are mutually disjoint, and defined by
$$F_n = E_n \cap (\bigcup_{l= n+1}^N E_{l}^c).$$

Observe that $G_n E_n = E_n$, and therefore $G_n F_n = F_n$ for every $n$. We have
\begin{align*}
\alpha\mu(F_n \cap X_k) & \leq \int_{F_n\cap X_k} \phi_n(x) d\mu(x) \\
& \leq \int_{X\times G_n} \un_{F_n\cap X_k}(x) f(tx) d\mu(x) d\rho_n(t)\\
&\leq \int_{X\times G_n} \un_{F_n\cap X_k}(tx) f(x) d\mu(x) d\rho_n(t)\\
& \leq \int_X f(x)\Big(\int_{G_n}\un_{F_n\cap X_k}(tx) d\rho_n(t)\Big) d\mu(x).
\end{align*}
For $\un_{F_n\cap X_k}(tx)$ to be non-zero, it is necessary that $x\in G_n (X_k\cap F_n) = G_n X_k \cap F_n$. It follows that
\begin{align*}
\alpha\mu(F_n \cap X_k) 
&\leq \int_{ F_n} f(x)\Big(\int_{G_n}\un_{F_n\cap X_k}(tx) d\rho_n(t)\Big) d\mu(x)\\
&\leq c_k \int_{F_n} f(x) d\mu(x).
\end{align*}
The conclusion is then an immediate consequence of the fact that $D_N$ is the union of the mutually disjoint subsets
$F_n$, $1\leq n \leq N$.
\end{proof}

\begin{rem}\label{ine-seule} As a particular case, we shall use the following assertion. Let $G\actson (X,\mu)$ be a $G$-action such
that $\mu$ is $\Delta$-relatively invariant. Let $X_k$ be a
 Borel subset of $X$ with $\mu(X_k) <+\infty$.  
Assume  the existence of $c_k$ such that
for almost every $x\in X$,  $\rho(\set{t\in G\,;\, tx\in X_k}) \leq c_k$.  Let $f : X\to \R_+$ be a Borel function. For $\alpha >0$,
set $\tilde{Q}_\alpha = \{x\in X\,;\, \int_G f(tx) d\rho(t) > \alpha\}$. Then,  we have
$$\alpha\mu(\tilde{Q}_\alpha \cap X_k) \leq c_k \int_{\tilde{Q}_\alpha} fd\mu.$$
\end{rem}

\begin{proof}[Proof of theorem \ref{lim-1}] (b) $\Rightarrow$ (a) is obvious. Let us show that (a) $\Rightarrow$ (c). Let $f\in L^1(X,\mu)$,
null outside $X_k$. Fix $p> k$. By theorem \ref{lim-2}, we know that 
 $$\lim_n \frac{\int_{G_n} f(tx) d\rho_n(t)}{\rho_n(\set{s\in G_n\,;\, sx\in X_p})} \quad\mbox{exists}\quad a.e. \,\,\,\mbox{on} \quad X_p.$$
If (a) holds, we immediately get the existence of $\lim_n \int_{G_n} f(tx) d\rho_n(t)$ almost everywhere on $X_p$ and therefore on the union $X$ of the $X_p$.
Now, observe that such functions $f$, supported in some $X_k$, form a dense subspace of $L^1(X,\mu)$.

Finally, let us prove that (c) implies (b). We introduce
$$\Lambda(f)(x) = \lim_{N\to +\infty} \Big(\sup_{n,m\geq N} \abs{\phi_n(f)(x) - \phi_m(f)(x)}\Big).$$
We fix an integer $p$, and we shall show that $\Lambda(f)(x) = 0$ for almost every $x\in X_p$. This will end the proof.
Take $g\in \cD$. We have $\Lambda(g) =0$ {\it a.e.} on $X$ and
$$\Lambda(f) = \Lambda(f) -\Lambda(g) \leq \Lambda(f-g) \leq 2 \abs{f-g}^\star.$$
Given $\alpha >0$, it follows from lemma \ref{lem-ine-max}, that
\begin{align*}
\mu(\set{x\in X_p\,;\,\Delta(f)(x) >\alpha})& \leq \mu(\set{x\in X_p\,;\, \abs{f-g}^\star >\alpha/2})\\
&\leq \frac{2c_p}{\alpha} \norm{f-g}_1.
\end{align*}
Since we can choose $g$ so that $ \norm{f-g}_1$ is as close to $0$ as we wish, we see that $\mu(\set{x\in X_p\,;\, \Delta(f)(x) >\alpha}) = 0$, from
which we get $\mu(\set{x\in X_p\,;\,\Delta(f)(x) >0}) = 0$.
\end{proof}

 We apply theorem \ref{lim-1} to the following situation where, in addition, it is possible to identify the limit.

\begin{thm}\label{lim-propre} Let $G$ act properly on a locally compact $\sigma$-compact space $X$
and   let $\mu$ be a $\Delta$-relatively invariant Radon measure on $X$. Let $(G_n)$ be an increasing sequence
of closed subgroups, whose union is dense in $G$ and which satisfies the modular condition (MC).
We assume that the sequence $(\rho_n)$ of Haar measures is normalized with respect to $\rho$. 
Then, for every $f\in L^1(X,\mu)$, we have, for a.e. $x\in X$,
\begin{equation}\label{orbital-lim}
\lim_{n\to \infty} \int_{G_n} f(tx) d\rho_n(t) = \int_G f(tx) d\rho(t).
\end{equation}
\end{thm}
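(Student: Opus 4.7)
The plan is to apply Theorem \ref{lim-1} to an exhaustion of $X$ by relatively compact open sets, and then identify the a.e. limit by density of $\cC_c(X)$ together with the maximal inequalities of Lemma \ref{lem-ine-max} and Remark \ref{ine-seule}.

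\textbf{Exhaustion and verification of the hypotheses of Theorem \ref{lim-1}.} Since $X$ is locally compact and $\sigma$-compact, I would fix an increasing sequence $(X_k)$ of open relatively compact subsets with $\overline{X_k}\subset X_{k+1}$ and $X=\bigcup_k X_k$. Hypothesis (i) is immediate since $\mu$ is Radon. For (ii), $\set{t\in G_n\,;\,tx\in X_k}$ is an open neighbourhood of $e$ in $G_n$ whenever $x\in X_k$, hence has positive $\rho_n$-measure. The crucial point is (iii): properness of the action makes the map $(t,x)\mapsto(tx,x)$ proper, so the preimage of $\overline{X_k}\times\overline{X_k}$ is a compact subset of $G\times X$ whose first projection $K_k$ is compact in $G$; thus $\set{t\in G\,;\,tx\in X_k}\subset K_k$ for every $x\in X_k$. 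Choosing $\phi\in\cC_c(G)$ with $\phi\ge\un_{K_k}$, the Fell normalization gives $\rho_n(\phi)\to\rho(\phi)<\infty$, so $c_k:=\sup_n\rho_n(\phi)<\infty$ and $\rho_n(\set{t\in G_n\,;\,tx\in X_k})\le c_k$ for all $x\in X_k$. Remark \ref{en-pratique} extends this bound to all $x\in X$.

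\textbf{Pointwise convergence on a dense subspace.} Take $\cD=\cC_c(X)$, which is dense in $L^1(X,\mu)$. For $f\in\cC_c(X)$ and every $x\in X$, the function $t\mapsto f(tx)$ is continuous and has compact support in $G$, because properness makes $\set{t\in G\,;\,tx\in \mathrm{supp}\,f}$ compact. Hence, directly from the Fell normalization,
\[
\int_{G_n}f(tx)\,d\rho_n(t)\ \longrightarrow\ \int_G f(tx)\,d\rho(t)\qquad\text{for every }x\in X.
\]
The implication (c)$\Rightarrow$(b) of Theorem \ref{lim-1} then yields that $\Phi(f)(x):=\lim_n\int_{G_n}f(tx)\,d\rho_n(t)$ exists a.e.\ for every $f\in L^1(X,\mu)$.

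\textbf{Identification of the limit.} Writing $x=gy$ with $y\in X_k$ gives $\set{t\in G\,;\,tx\in X_k}=\set{s\in G\,;\,sy\in X_k}\,g^{-1}$, so right-invariance of $\rho$ yields $\rho(\set{t\in G\,;\,tx\in X_k})\le\rho(K_k)=:c'_k$ for every $x\in GX_k$ (the complement contributes $0$). Lemma \ref{lem-conv} then ensures that $\Psi(f)(x):=\int_G f(tx)\,d\rho(t)$ is well-defined a.e.\ for every $f\in L^1(X,\mu)$. Since $\Phi(g)=\Psi(g)$ pointwise for $g\in\cC_c(X)$, linearity and the bounds $\abs{\Phi(f-g)}\le\abs{f-g}^\star$, $\abs{\Psi(f-g)}\le\int_G\abs{f-g}(tx)\,d\rho(t)$ combined with Lemma \ref{lem-ine-max} and Remark \ref{ine-seule} give
\[
\mu\bigl(\set{x\in X_p\,;\,\abs{\Phi(f)(x)-\Psi(f)(x)}>2\alpha}\bigr)\ \le\ \frac{c_p+c'_p}{\alpha}\,\norm{f-g}_1
\]
for every $g\in\cC_c(X)$. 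Letting $g\to f$ in $L^1$ drives the right-hand side to $0$, so $\Phi(f)=\Psi(f)$ a.e.\ on each $X_p$, hence on $X$.

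\textbf{Anticipated obstacle.} The principal technical step is the joint use of properness (to trap each $\set{t\in G\,;\,tx\in X_k}$ inside a fixed compact $K_k$) and of the Fell normalization (to convert this compactness into the uniform bound $\sup_n\rho_n(K_k\cap G_n)<\infty$ required by hypothesis (iii) of Theorem \ref{lim-1}); once (iii) is secured, both the convergence for $f\in\cC_c(X)$ and the density-plus-maximal-inequalities identification of the limit are routine.
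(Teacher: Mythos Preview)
Your proof is correct and follows essentially the same route as the paper: exhaust $X$ by relatively compact open sets, use properness plus the Fell normalization to secure the uniform bound (iii) of Theorem~\ref{lim-1}, invoke (c)$\Rightarrow$(b) with $\cD=\cC_c(X)$, and then identify the limit via the maximal inequalities of Lemma~\ref{lem-ine-max} and Remark~\ref{ine-seule}. The only cosmetic difference is that the paper packages the identification step into a single oscillation function $\tilde\Lambda(f)(x)=\limsup_n\big|\int_{G_n}f(tx)\,d\rho_n(t)-\int_G f(tx)\,d\rho(t)\big|$ and uses one constant $c_k$ (taking $c_k\ge\rho(\varphi)$ so that it dominates $\rho(K_k)$ as well), whereas you first establish $\Phi(f)$ and $\Psi(f)$ separately with two constants $c_p,c'_p$; the underlying estimates are identical.
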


\begin{proof} Let $(X_k)$ be an increasing sequence of open relatively compact subspaces of $X$, with $X = \cup X_k$.
Of course, since the action is proper, we have 
$$0<\rho_n(\set{t\in G_n\,;\, tx\in X_k}) <+\infty$$
 for every $x\in X_k$.
Let us show that condition (iii) of theorem \ref{lim-1} is also fulfilled. Set $K_k = \set{t\in G\,;\, tX_k \cap X_k \not = \emptyset}$.
This set is relatively compact. We choose a continuous function $\varphi$ on $G$, with compact support, such that $\un_{K_k} \leq \varphi$.
We have
$$\forall n\in \N, \forall x\in X_k,\quad \rho_n(\set{t\in G_n\,;\, tx\in X_k}) \leq \rho_n(\varphi).$$
Since $\lim_n \rho_n(\varphi) = \rho(\varphi) <+\infty$, we obtain the existence of a constant $c_k$
such that 
$$\forall n\in \N,\,\, \forall x\in X_k,\quad \rho_n(\set{t\in G_n\,;\, tx\in X_k}) \leq c_k.$$
Now (iii) of theorem \ref{lim-1} is satisfied, by remark \ref{en-pratique}. Clearly, we may also choose $c_k$ such that, as well, $ \rho(\set{t\in G\,;\,tx\in X_k}) \leq c_k$
for all $x\in X$.

The required integrability conditions for (\ref{orbital-lim}) follow from lemma \ref{lem-conv}. The existence of the limit is an
immediate consequence of theorem \ref{lim-1}, applied to  the space  $\cD = \cC_c(X)$ of continuous functions with compact
support in $X$. We use the fact that for every $x\in X$ and $f \in  \cC_c(X)$, the function $t\mapsto f(tx)$ is continuous with compact support.
Hence, by the normalization of the $\rho_n$, we have the existence of $\lim_n \int_{G_n} f(tx) d\rho_n(t)$. Here, we even know that
the limit  is  $\int_G f(tx) d\rho(t)$, for every $x$. 

It remains to identify the limit for every $f\in L^1(X,\mu)$. We set
$$\tilde{\Lambda}(f)(x) = \lim_{N\to\infty}\Big(\sup_{n\geq N} \abs{\int_{G_n} f(tx) d\rho_n(t) - \int_G f(tx) d\rho(t)}\Big).$$
As in the proof of theorem \ref{lim-1}, we fix $p$, and we only need to show that $\tilde{\Lambda}(f)(x) = 0$ for almost every $x\in X_p$.
Let $g$ be a continuous function with compact support  on $X$. We have
\begin{align*}
\tilde{\Lambda}(f)(x) &= \tilde{\Lambda}(f)(x) - \tilde{\Lambda}(g)(x)\\
&\leq \tilde{\Lambda}(f-g)(x)\\
&\leq \lim_N\Big( \sup_{n\geq N} \abs{\int_{G_n}(f(tx) - g(tx)) d\rho_n(t)} \Big)+ \abs{\int_G(f(tx) - g(tx)) d\rho(t)}\\
&\leq \abs{f-g}^\star + \int_G\abs{f(tx) -g(tx)} d\rho(t).
\end{align*}
Given $\alpha >0$, we have
\begin{align*}
\mu(\set{x\in X_p\,;\,\tilde{\Lambda}(f)(x)> \alpha})\leq &\mu(\set{x\in X_p\,;\, \abs{f-g}^\star > \alpha/2})\\
& + \mu(\set{x\in X_p\,;\, \int_G\abs{f(tx) -g(tx)} d\rho(t) >\alpha/2})\\
& \leq \frac{4 c_p}{\alpha} \norm{f-g}_1.
\end{align*}
The last inequality follows from lemma \ref{lem-ine-max} and remark \ref{ine-seule}.
Now, we approximate $f$ by a sequence $(f_n)$ of continuous functions with compact support. This
gives $\mu(\set{x\in X_p\,;\, \tilde{\Lambda}(f)(x)> \alpha}) = 0$. The conclusion is obtained by letting $\alpha$
go to $0$.
\end{proof}

As a particular case, we obtain the following result of Ross and Str\"omberg. In contrast to their proof,
we do not use the theorem of Edwards and Hewitt (\cite[Theorem 1.6]{EH}) on pointwise limits of sublinear operators whose ranges
are families of measurable functions.

\begin{cor}[\cite{R-S}]\label{R-S}  Let $G$ be a locally compact group, and $(G_n)$ be an increasing sequence
of closed subgroups,  whose union is dense in $G$ and which satisfies the modular condition (MC).
We assume that the sequence $(\rho_n)$ of Haar measures is normalized with respect to $\rho$.
Then, for every $f\in L^1(G,\rho)$, we have
$$\lim_n \int_{G_n} f(tx) d\rho_n(t) = \int_{G} f(t) d\rho(t), \quad a.e.$$
\end{cor}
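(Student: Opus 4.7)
My plan is to recognize the corollary as a direct specialization of Theorem \ref{lim-propre} to the prototype action, namely the left translation action $G \actson (G,\rho)$.

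First I would verify the hypotheses of Theorem \ref{lim-propre} in this setting. The space $X = G$ is locally compact and $\sigma$-compact by the standing convention of \S 2. The left translation action is continuous and proper: given compact sets $K, L \subset G$, the set $\{t \in G : tL \cap K \neq \emptyset\}$ is contained in $KL^{-1}$, which is compact. The right Haar measure $\rho$ is a Radon measure on $G$, and it is $\Delta$-relatively invariant under the left action, because the relation $s\rho = \Delta(s)\rho$ is precisely the defining property of the modular function (see the remark after Definition \ref{rel-inv}). Finally, the hypotheses imposed on $(G_n)$—increasing sequence of closed subgroups with dense union, modular condition (MC), and Fell normalization of the right Haar measures $\rho_n$ with respect to $\rho$—are exactly the ones present in the statement of the corollary.

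With all assumptions verified, Theorem \ref{lim-propre} applies directly and yields, for every $f \in L^1(G,\rho)$ and almost every $x \in G$,
$$\lim_n \int_{G_n} f(tx) \, d\rho_n(t) = \int_G f(tx) \, d\rho(t).$$
To conclude, I would invoke the right-invariance of $\rho$: the change of variable $t \mapsto tx^{-1}$ gives $\int_G f(tx) \, d\rho(t) = \int_G f(t) \, d\rho(t)$, which is the value appearing in the statement of Corollary \ref{R-S}.

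There is essentially no obstacle here; the corollary is a clean specialization. The only point that merits a word of justification is the properness of the left translation action, since Theorem \ref{lim-propre} is formulated under that hypothesis (together with the resulting integrability provided by Lemma \ref{lem-conv} via Remark \ref{action}). Once this is noted, the identification of the limit reduces to the standard invariance property of the Haar measure.
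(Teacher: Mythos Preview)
Your proof is correct and follows the same approach as the paper: apply Theorem~\ref{lim-propre} to the left translation action $G\actson(G,\rho)$. The paper's proof is a single line to this effect; your version adds the explicit verification of properness and the right-invariance step $\int_G f(tx)\,d\rho(t)=\int_G f(t)\,d\rho(t)$, both of which are implicit in the paper but worth spelling out.
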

\begin{proof} We apply theorem \ref{lim-propre} to $G\actson (G,\rho)$.
\end{proof}

We can now state our main theorem.

\begin{thm}\label{lim-3}  Let $G\actson (X,\mu)$ be an action on a measured space, leaving the measure $\Delta$-relatively invariant. 
Let $(G_n)$ be an increasing sequence
of closed subgroups, whose union is dense in $G$ and which satisfies the modular condition (MC).
We assume that the sequence $(\rho_n)$ of Haar measures is normalized with respect to $\rho$.   
The two following properties are equivalent :
\begin{itemize}
\item[(a)] $X$ is a countable union of Borel subsets $B_k$ of finite measure, such that for every $k$ and almost every
$x\in X$, we have
\begin{equation}\label{eqlim-3} \rho(\set{t\in G\,;\, tx\in B_k}) = \int_G \un_{B_k}(tx) d\rho(t) <+\infty.
\end{equation}
\item[(b)]
For every $f\in L^1(X,\mu)$ and for almost every $x\in X$, 
$$\lim_n \int_{G_n} f(tx) d\rho_n(t) = \int_{G} f(tx) d\rho(t).$$
\end{itemize}
\end{thm}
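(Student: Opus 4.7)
The direction $(b) \Rightarrow (a)$ is immediate: using the $\sigma$-finiteness of $\mu$, write $X = \cup_k B_k$ with $\mu(B_k) < +\infty$, and apply (b) to $f = \un_{B_k} \in L^1(X,\mu)$. The integral convention of the footnote forces $\int_G \un_{B_k}(tx)\, d\rho(t)$ to be finite for $\mu$-a.e.\ $x$, which is (\ref{eqlim-3}).

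The substantial direction is $(a) \Rightarrow (b)$. My plan is to invoke the Ross--Str\"omberg theorem (Corollary \ref{R-S}) along each orbit, and then transfer the resulting orbital convergence to a $\mu$-a.e.\ statement on $X$ using the fundamental identity (\ref{eq-crucial}). Fix $f \in L^1(X,\mu)$; splitting into real and imaginary, positive and negative parts, we may assume $f \geq 0$. By Lemma \ref{lem-conv}, assumption (a) furnishes a $\mu$-conull set $X_0 \subset X$ such that $F_x : t \mapsto f(tx)$ belongs to $L^1(G,\rho)$ for every $x \in X_0$. Applying Corollary \ref{R-S} to $F_x$ yields, for $\rho$-a.e.\ $r \in G$,
\[
\lim_n \int_{G_n} F_x(tr)\, d\rho_n(t) \;=\; \int_G F_x(t)\, d\rho(t).
\]
Since $F_x(tr) = f(t(rx))$ and, by right-invariance of $\rho$, $\int_G F_x(t)\, d\rho(t) = \int_G f(tx)\, d\rho(t) = \int_G f(t(rx))\, d\rho(t)$, this reads, with $y := rx$,
\[
\lim_n \int_{G_n} f(ty)\, d\rho_n(t) \;=\; \int_G f(ty)\, d\rho(t).
\]
So the conclusion of (b) holds at every point $y = rx$ with $x \in X_0$ and $r$ in a $\rho$-conull subset of $G$ depending on $x$.

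It remains to convert this ``convergence along $G$-orbits'' into $\mu$-a.e.\ convergence on $X$. Let $Y \subset X$ be the Borel set of points where the limit in (b) fails to exist or fails to equal the right-hand side. The previous step gives $\rho(\set{r \in G : rx \in Y}) = 0$ for every $x \in X_0$, hence
\[
\int_X \rho(\set{r \in G : rx \in Y})\, d\mu(x) \;=\; 0.
\]
Applying identity (\ref{eq-crucial}) to the function $(y,r) \mapsto \un_Y(y)$ (which does not depend on $r$), the same quantity equals
\[
\int_{X\times G} \un_Y(rx)\, d\mu(x)\, d\rho(r) \;=\; \int_{X\times G} \un_Y(x)\, d\mu(x)\, d\lambda(r) \;=\; \lambda(G)\, \mu(Y).
\]
Since $\lambda(G) > 0$, we conclude $\mu(Y) = 0$, which is (b).

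The crucial move is the last one, where the $\Delta$-relative invariance of $\mu$ (in the form (\ref{eq-crucial})) does exactly the work needed to push a $\rho$-null set of directions $r$ at each orbit base-point $x$ down to a $\mu$-null set of bad points $y \in X$. All other ingredients---measurability of $Y$, the $L^1$-theory on the orbit measures, and Ross--Str\"omberg on $G$ itself---are already in place.
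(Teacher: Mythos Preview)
Your proof is correct and follows essentially the same route as the paper's: invoke Lemma~\ref{lem-conv} to get orbital integrability, apply Ross--Str\"omberg (Corollary~\ref{R-S}) orbit-by-orbit, and then transfer the resulting $\rho$-a.e.\ convergence along each orbit to $\mu$-a.e.\ convergence on $X$ by a Fubini-type argument. The only cosmetic difference is in that last step: the paper works with the good set in $G\times X$, applies Fubini--Tonelli twice, and picks a single $s\in G$ for which the section $C(s)$ is conull (using $s\mu\sim\mu$), whereas you compute $\int_X \rho(\{r:rx\in Y\})\,d\mu(x)=\lambda(G)\,\mu(Y)$ directly via identity~(\ref{eq-crucial}) and conclude $\mu(Y)=0$---arguably a cleaner packaging of the same idea.
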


\begin{proof} Assumption (b) contains the assertion that for every $f\in L^1(X,\mu)$ and almost every $x\in X$, the function
$f_x : t \mapsto f(tx)$ is $\rho$-integrable. Thus, obviously (b) implies (a).

Let us show that (a) implies (b). Let $f\in L^1(X,\mu)$. By lemma \ref{lem-conv}, there exists a conull subset $E\subset X$, such that for every $x\in E$,
the function $f_x : t\mapsto f(tx)$ is in $L^1(G,\rho)$. We apply to $f_x$ the previous corollary. There exists a conull subset
$A_x$ in $G$, such that for every $s\in A_x$ :
\begin{itemize}
\item[(i)] for $n\in \N$, $t\in G_n \mapsto f_x(ts)$ in $\rho_n$-integrable ;
\item[(ii)] $\lim_n \int_{G_n} f_x(ts) d\rho_n(t) = \int_G f_x(t) d\rho(t)$.
\end{itemize}

Denote by  $D$ the set of all $(s,x)\in G\times X$ for which 
\begin{itemize}
\item $t\in G_n \mapsto f(tsx) = f_x(ts)$ is $\rho_n$-integrable for all $n$,
\item  $t\in G \mapsto f(tsx) = f_x(ts)$ is $\rho$-integrable,
\item $\lim\int_{G_n} f(tsx) d\rho_n(t)) = \int_G f(tsx) d\rho(t)$.
\end{itemize}
Then $D$  is a Borel subset of $G\times X$. Moreover,
$$D \supset \set{(s,x)\,;\, x\in E, s\in A_x}.$$
It follows, by using twice the Fubini-Tonelli theorem, that $D$ is conull, and that for almost every $s\in G$, we have, for almost $x\in X$ :
\begin{itemize}
\item[(1)]  $f_{sx} $ is $\rho_n$-integrable for $n\in \N$, and is $\rho$-integrable ;
\item[(2)]  $\lim_n \int_{G_n} f(tsx) d\rho_n(t) = \int_G f(tx) d\rho(t)$.
\end{itemize}
Choose such a $s$ and let $C(s)$ be a conull subset of $X$ for which (1) and (2) occur. Then for any $y\in sC(s)$, which is also conull,
we have the required properties.
\end{proof}

\begin{cor}\label{compact1} Let $G$ be a compact group acting on a measured space $(X,\mu)$ in such a way that
the $\sigma$-finite measure $\mu$ is invariant. Let $(G_n)$ be an increasing sequence of closed subgroups of $G$, whose union
is dense in $G$. We choose the Haar measures to have total mass $1$. Then for every $f\in L^1(X,\mu)$ we have
$$\lim_{n\to +\infty} \int_{G_n} f(tx) d\rho_n(t) = \int_G f(tx) d\rho(t).$$
\end{cor}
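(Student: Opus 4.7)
The plan is to deduce this corollary directly from Theorem \ref{lim-3}, so I only need to check that its two structural hypotheses and condition (a) hold in this compact setting.

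First I would observe that since $G$ is compact, its modular function $\Delta$ is identically $1$, and the same is true for every closed subgroup $G_n$. Consequently the modular condition (MC) is automatic, and the invariance of $\mu$ is exactly the statement that $\mu$ is $\Delta$-relatively invariant. As recalled in Remark \ref{choix}, when $G$ and the $G_n$ are compact the normalization $\rho(G)=1$, $\rho_n(G_n)=1$ is precisely the Fell normalization with respect to $\rho$. Thus the standing assumptions of Theorem \ref{lim-3} are met.

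Next I would verify condition (a) of Theorem \ref{lim-3}. Since $(X,\mu)$ is $\sigma$-finite we may write $X = \bigcup_k B_k$ with $B_k$ Borel and $\mu(B_k)<+\infty$. For any such $B_k$ and any $x\in X$,
$$\rho(\set{t\in G\,;\, tx\in B_k}) \leq \rho(G)=1 <+\infty,$$
so the integrability required in (\ref{eqlim-3}) is automatic. Hence condition (a) of Theorem \ref{lim-3} is satisfied, and the theorem yields the conclusion
$$\lim_n \int_{G_n} f(tx)\, d\rho_n(t) = \int_{G} f(tx)\, d\rho(t)$$
for every $f\in L^1(X,\mu)$ and almost every $x\in X$.

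There is essentially no obstacle here: the compactness of $G$ trivializes both the modular condition and the pointwise integrability condition (a), so the corollary is just a plug-in. The only point worth flagging is the normalization issue in Remark \ref{choix}, to confirm that the classical probability-measure normalization coincides with the Fell normalization used in Theorem \ref{lim-3}.
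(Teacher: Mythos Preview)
Your argument is correct and matches the paper's approach: the corollary is stated immediately after Theorem \ref{lim-3} without an explicit proof, precisely because compactness trivializes (MC), the $\Delta$-relative invariance, the Fell normalization (Remark \ref{choix}), and condition (a) via $\rho(G)=1$. Your write-up supplies exactly those routine verifications.
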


Theorem \ref{lim-3} also applies to several other situations. We have already mentioned in remark \ref{action} the case of continuous
actions with closed orbits and compact stabilizers. We now give another example of application.

\begin{cor} Let $G$ be a  locally compact group acting on a measured space $(X,\mu)$ is such a way that
the $\sigma$-finite measure $\mu$ is invariant. Let $(G_n)$ be an increasing sequence of closed subgroups
of $G$, whose union is dense in $G$ and which satisfies the modular condition (MC).
We assume that the sequence $(\rho_n)$ of Haar measures is normalized with respect to $\rho$. 
 Let $f\in L^1(G\times X, \rho\otimes \mu)$. Then for $\rho\otimes \mu$ almost every $(s,x)\in G\times X$,
we have
$$\lim_{n\to \infty} \int_{G} f(ts, tx) d\rho_n(t) = \int_{G} f(ts,tx) d\rho(t).$$
\end{cor}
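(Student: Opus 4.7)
The plan is to apply Theorem \ref{lim-3} to the diagonal left action of $G$ on the product measured space $(Y,\nu) = (G\times X,\rho\otimes\mu)$, defined by $t\cdot(s,x) = (ts,tx)$. First I would verify that $\nu = \rho\otimes\mu$ is $\Delta$-relatively invariant under this action. For $r\in G$ and a Borel rectangle $A\times B$, the preimage of $A\times B$ under $(s,x)\mapsto (rs,rx)$ is $(r^{-1}A)\times(r^{-1}B)$, so
$$(r\nu)(A\times B)=\rho(r^{-1}A)\,\mu(r^{-1}B)=\Delta(r)\rho(A)\,\mu(B)=\Delta(r)\,\nu(A\times B),$$
using $s\rho = \Delta(s)\rho$ (the $\Delta$-relative invariance of $\rho$ under left translation on $G$) and the $G$-invariance of $\mu$. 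The sequence $(G_n)$ still has dense union in $G$, still satisfies (MC), and the Fell normalization of the $\rho_n$ is unchanged, so the group-side hypotheses of Theorem \ref{lim-3} transfer without modification.

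Next I would verify condition (a) of Theorem \ref{lim-3} for the diagonal action on $(Y,\nu)$. Since $G$ is $\sigma$-compact, write $G=\bigcup_i K_i$ with $K_i$ compact, and since $\mu$ is $\sigma$-finite, write $X=\bigcup_j D_j$ with $\mu(D_j)<+\infty$. Then the rectangles $B_{i,j}=K_i\times D_j$ form a countable cover of $Y$ by Borel sets of finite $\nu$-measure, and for \emph{every} $(s,x)\in Y$ (not merely almost every),
$$\rho(\set{t\in G\,;\, t\cdot(s,x)\in B_{i,j}})\leq \rho(\set{t\in G\,;\, ts\in K_i})=\rho(K_i s^{-1})=\rho(K_i)<+\infty,$$
by right-invariance of $\rho$. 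Hence condition (a) holds.

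Finally, Theorem \ref{lim-3} applied to $f\in L^1(Y,\nu)$ and the diagonal action yields that for $\nu$-almost every $(s,x)\in G\times X$,
$$\lim_n \int_{G_n} f(ts,tx)\,d\rho_n(t)=\int_G f(ts,tx)\,d\rho(t),$$
which is the stated conclusion; the a.e.\ $\rho$-integrability of $t\mapsto f(ts,tx)$ is Lemma \ref{lem-conv} applied to the same diagonal action. There is essentially no technical obstacle: the pleasant observation is that condition (a) comes for free from the right-invariance of $\rho$, with no finiteness hypothesis on the $X$-component of the orbit ever entering the bound.
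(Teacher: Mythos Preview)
Your proof is correct and follows essentially the same route as the paper's: both apply Theorem~\ref{lim-3} to the diagonal $G$-action on $(G\times X,\rho\otimes\mu)$, verify condition~(a) via product sets $K_i\times D_j$ (the paper uses relatively compact open $U_k$ in place of your compact $K_i$, an immaterial difference), and bound the return-time set by projecting to the $G$-factor and using right-invariance of $\rho$. Your write-up is in fact more explicit than the paper's, since you spell out the $\Delta$-relative invariance of $\rho\otimes\mu$ under the diagonal action, which the paper leaves implicit.
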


\begin{proof} We apply theorem \ref{lim-3} to $B_k = U_k \times V_k$, where $(U_k)_k$ is a sequence
of relatively compact open subsets of $G$ with $\cup U_k = G$, and where $(V_k)_k$ is a sequence of
Borel subsets of $X$, of finite measure, with $\cup V_k = X$. It suffices to observe that
$$\set{t\in G\,;\, t(s,x) \in U_k \times V_k}  \subset \set{t\in G\,;\, ts \in U_k },$$
and
$$\rho(\set{t\in G, t(s,x) \in B_k}) \leq \rho(\set{t\in G\,;\, ts \in U_k }) <+\infty.$$
\end{proof}

\begin{cor}\label{presque}  Let $G\actson (X,\mu)$ and $(G_n)$ be as in the previous corollary. 
Let $h\in L^1(X,\mu)$ and let $E$ be a Borel subset of $G$ with $\rho(E) < \infty$.
Then, for  almost every $s\in G$, we have
$$\lim_{n\to \infty} \int_{Es \cap G_n} h(tx) d\rho_n(t) = \int_{Es} h(tx) d\rho(t).$$
\end{cor}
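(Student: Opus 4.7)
The plan is to deduce this corollary directly from the previous one by a judicious choice of test function, together with a change of variable $s \leftrightarrow s^{-1}$ to rewrite the domain of integration.

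First, define $f : G \times X \to \C$ by $f(u,y) = \un_E(u) h(y)$. Then $f \in L^1(G\times X, \rho\otimes\mu)$, since
$$\int_{G\times X} \abs{f(u,y)} d\rho(u) d\mu(y) = \rho(E)\norm{h}_1 <+\infty.$$
Applying the previous corollary to this $f$, we obtain that for $\rho\otimes\mu$-almost every $(s',x) \in G\times X$,
$$\lim_{n\to\infty} \int_{G_n} \un_E(ts') h(tx) d\rho_n(t) = \int_G \un_E(ts') h(tx) d\rho(t),$$
with both sides well defined and finite. Since $\set{t\in G\,;\,ts' \in E} = E(s')^{-1}$, this equality reads
$$\lim_{n\to\infty} \int_{E(s')^{-1} \cap G_n} h(tx) d\rho_n(t) = \int_{E(s')^{-1}} h(tx) d\rho(t).$$

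It then suffices to substitute $s = (s')^{-1}$. The one subtle point is that the substitution preserves the ``almost every'' quantifier, which I would justify by recalling that the map $T : s \mapsto s^{-1}$ sends the right Haar measure $\rho$ to a left Haar measure, hence to a measure equivalent to $\rho$ itself; consequently a $\rho$-null subset of $G$ is sent to a $\rho$-null set, and the analogous statement for $\rho\otimes\mu$-null subsets of $G\times X$ follows via the map $(s,x)\mapsto (s^{-1},x)$ and Fubini. Thus for $\rho\otimes\mu$-almost every $(s,x) \in G\times X$ (hence, by Fubini, for $\mu$-a.e.\ $x$ and $\rho$-a.e.\ $s$), the desired identity
$$\lim_{n\to\infty} \int_{Es \cap G_n} h(tx) d\rho_n(t) = \int_{Es} h(tx) d\rho(t)$$
holds.

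There is essentially no main obstacle here: the deep content lies in the previous corollary (and ultimately in Theorem \ref{lim-3}), while this statement is a cosmetic repackaging. The only thing that requires a line of justification is the invariance of null sets under inversion, which is immediate from $\lambda = \Delta\rho$ and the mutual absolute continuity of the two Haar measures.
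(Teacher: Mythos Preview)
Your proof is correct and follows essentially the same route as the paper: apply the previous corollary to $f(u,y)=\un_E(u)\,h(y)$, which yields the desired equality with $E(s')^{-1}$ in place of $Es$, and then substitute $s=(s')^{-1}$. The paper compresses this into a single sentence and leaves the inversion step implicit; your added justification that $s\mapsto s^{-1}$ preserves $\rho$-null sets (via $\lambda=\Delta\rho$) is exactly the point that makes the substitution legitimate.
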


\begin{proof} We apply the previous corollary with $f(t,x) = \un_{E}(t)f(x)$.
\end{proof}

\begin{ex}\label{example} Let $G = \R$, acting on $\R$ be left translations, and for $n\in \N$, take $G_n = \Z/(2^n\Z)$. The Haar measure on $G_n$ is normalized
by giving the weight $1/(2^n)$ to each point and we take for $\mu$ the Lebesgue measure $\rho$ on $\R$, normalized by $\rho([0,1]) = 1$. Let $E$ be a Borel subset of
$\R$ such that $\rho(E) < +\infty$.  The previous corollary gives that for every $f\in L^1(\R,\rho)$ and for almost every $s\in \R$, we have
$$\lim_{n\to \infty} \frac{1}{2^n} \sum_{\set{k\,;\,k/2^n \in E+s}} f\big(\frac{k}{2^n}+x\big) = \int_{E+s} f(t+x)d\rho(t)\quad a.e.$$ 

Let us take $E = \Q$ for example. For every $s$ irrational, the above equality holds (both sides are $0$). On the other hand, for $s\in \Q$, and
$f = \un_{[0,1]}$, this equality is false for every $x$.
\end{ex}

\subsection{An extension of a theorem of Civin}
We are now interested by the following problem : let $G \actson (X,\mu)$ as before, that is, the $\sigma$-finite measure $\mu$ is $\Delta$-relatively invariant. 
We are given an increasing sequence of closed subgroups of $G$, with dense union, and satisfying the modular condition. Let $E$ be a Borel subset of $X$
with $\rho(E) < \infty$, and let $f\in L^1(X,\mu)$. Find conditions under which 
$$\lim_n \int_{G_n \cap E} f(tx) d\rho_n(t) = \int_E f(tx) d\rho(t)$$
almost everywhere. 

In \cite{Civ}, Civin has considered the particular case where $G= \R$, $G_n = \Z/(2^n\Z)$ as in example \ref{example}. Let $(t,x)\mapsto t+x$ be a measure
preserving action of $\R$ onto a measured space $(X,\mu)$. Civin's result states that for every $f\in L^1(X,\mu)$ such that $f(1+ x) = f(x)$ {\it a.e.}, then
for almost every $x\in X$, 
\begin{align*}
\lim_{n\to \infty} \int_{G_n \cap [0,1[} f(t+x) d\rho_n(t) & = \lim_{n\to \infty} \frac{1}{2^n} \sum_{k = 1}^{2^n} f(\frac{k}{2^n} + x) \\
& = \int_{0}^1 f(t+x) d\rho(t).
\end{align*}

More generally, we have :

\begin{thm}\label{lattice} Let $(G_n)_{n\in \N}$ be an increasing sequence of lattices of $G$ (therefore $G$ is unimodular), with dense union,
and let  $D$ be a fundamental domain for $G_0$. Let $G\actson (X,\mu)$  
be a measure preserving action. We assume that the Haar measure of $G$ is normalized so that the volume of $D$ is $1$.
Let $f\in L^1(X,\mu)$ be such that,  for every $t\in G_0$, $f(tx) = f(x)$ almost everywhere. Then
$$\lim_n \frac{1}{\abs{G_n \cap D}} \sum_{t\in G_n \cap D} f(tx) = \int_D f(tx) d\rho(t)$$
for a.e. $x\in X$.
\end{thm}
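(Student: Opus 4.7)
The plan is to apply Corollary \ref{presque} with $E = D$ and $h = f$, and then to exploit the $G_0$-invariance of $f$ to eliminate the auxiliary variable $s$ from the conclusion.

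First I handle the normalization. Since $G_0$ is a lattice with $\rho$-covolume $\rho(D) = 1$ and $G_0 \subset G_n$, the lattice $G_n$ has covolume $[G_n:G_0]^{-1}$, so the Fell-normalized measure $\rho_n$ (a multiple of the counting measure on the discrete group $G_n$) assigns weight $[G_n:G_0]^{-1}$ to each point. Since $D$ is a fundamental domain for $G_0$ and $G_0 \subset G_n$, the finite set $G_n \cap D$ is a system of representatives for $G_0 \backslash G_n$, so $|G_n \cap D| = [G_n:G_0]$, and
$$\int_{D \cap G_n} f(tx)\,d\rho_n(t) = \frac{1}{|G_n \cap D|} \sum_{t \in G_n \cap D} f(tx).$$
It therefore suffices to prove $\int_{D \cap G_n} f(tx)\,d\rho_n(t) \to \int_D f(tx)\,d\rho(t)$ for a.e.~$x$.

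Now applying Corollary \ref{presque} with $E = D$ and $h = f$ yields: for a.e.~$s \in G$ and a.e.~$x \in X$,
$$\int_{Ds \cap G_n} f(tx)\,d\rho_n(t) \longrightarrow \int_{Ds} f(tx)\,d\rho(t). \quad (\star)$$
The crucial observation is that, thanks to the $G_0$-invariance of $f$, both sides of $(\star)$ are in fact independent of $s$. Indeed, since $f(s_0 y) = f(y)$ for $s_0 \in G_0$, the function $t \mapsto f(tx)$ on $G$ is invariant under left multiplication by $G_0$, so it descends to a function $\bar{f}_x$ on $G_0 \backslash G$. Arranging $D$ to be a left fundamental domain, $G = \sqcup_{s_0 \in G_0} s_0 D$ (one may replace $D$ by $D^{-1}$ if necessary, since $G$ is unimodular), right-multiplying by $s$ gives $G = \sqcup_{s_0} s_0 (Ds)$, so $Ds$ is again a left fundamental domain for $G_0$, and $Ds \cap G_n$ is a transversal for $G_0 \backslash G_n$. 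Since the integral of a left-$G_0$-invariant function against $\rho_n$ or $\rho$ over such a transversal depends only on the descended function $\bar{f}_x$, we obtain
$$\int_{Ds} f(tx)\,d\rho(t) = \int_D f(tx)\,d\rho(t), \qquad \int_{Ds \cap G_n} f(tx)\,d\rho_n(t) = \int_{D \cap G_n} f(tx)\,d\rho_n(t),$$
for every $s \in G$.

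Plugging these $s$-free expressions into $(\star)$ renders the ``a.e.~$s$'' qualifier vacuous: for a.e.~$x$, the limit holds, which is the theorem. The step requiring some care is the verification that $Ds$ remains a fundamental domain of the correct type and that orbital integrals over different transversals agree; this is precisely where the $G_0$-invariance of $f$ is used decisively, allowing one to collapse ``for a.e.~$s$'' to ``for every $s$'', and in particular to $s = e$.
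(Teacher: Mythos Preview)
Your proof is correct and follows essentially the same approach as the paper: apply Corollary~\ref{presque} with $E=D$, then use the $G_0$-invariance of $f$ to show that both the discrete and continuous orbital integrals over $Ds$ coincide with those over $D$, so the ``a.e.\ $s$'' conclusion collapses to the desired statement. The parenthetical about replacing $D$ by $D^{-1}$ is unnecessary---the paper (and your own argument) simply takes $D$ to be a left fundamental domain $G=\bigsqcup_{s_0\in G_0}s_0 D$, and under that reading $Ds$ is automatically another one.
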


\begin{proof} We normalize the Haar measure on $G_n$ by giving to each point the measure $1/\abs{G_n \cap D}$. This gives a normalized
sequence of Haar measures with respect to $\rho$. Corollary \ref{presque} gives that, for almost every $s\in G$, we have
$$\lim_n \frac{1}{\abs{G_n \cap Ds}} \sum_{t\in G_n \cap Ds} f(tx)  = \int_{Ds} f(tx) d\rho(t).$$
For every $t\in G_n \cap Ds$, there exists a unique $g\in G_0$ such that $gt \in D$. Due to the $G_0$-invariance of $f$,
we have $f(tx) = f(gtx)$ and therefore 
$$\frac{1}{\abs{G_n \cap Ds}} \sum_{t\in G_n \cap Ds} f(tx) = \frac{1}{\abs{G_n \cap D}} \sum_{t\in G_n \cap D} f(tx).$$
On the other hand, by \cite[Corollaire, Page 69]{Bour},  the $G_0$-invariance of $f$ implies that the integral $\int_{Ds} f(tx) d\rho(t)$ does not depend on the choice of the fundamental domain :
we  have $\int_{Ds} f(tx) d\rho(t) = \int_D f(tx) d\rho(t)$.
\end{proof}

\bibliographystyle{alpha}
\bibliography{ergobibliography}
\end{document}